\newcounter{todocounter}
\DeclareDocumentCommand\addreference{g}{\stepcounter{todocounter}\todo[color = blue!30]{\thetodocounter. Add reference\IfNoValueF{#1}{: #1}}\xspace}
\DeclareDocumentCommand\checkthis{g}{\stepcounter{todocounter}\todo[color = red!50]{\thetodocounter. Check this\IfNoValueF{#1}{: #1}}\xspace}
\DeclareDocumentCommand\fixthis{g}{\stepcounter{todocounter}\todo[color = orange!50]{\thetodocounter. Fix this\IfNoValueF{#1}{: #1}}\xspace}
\DeclareDocumentCommand\expand{g}{\stepcounter{todocounter}\todo[color = green!50]{\thetodocounter. Expand\IfNoValueF{#1}{: #1}}\xspace}
\declaretheoremstyle[
  spaceabove = 3pt,
  spacebelow = 3pt,
]{lecture}
\theoremstyle{lecture}
\newtheorem{theorem}{Theorem}
\newtheorem{corollary}[theorem]{Corollary}
\newtheorem{definition}[theorem]{Definition}
\newtheorem{lemma}[theorem]{Lemma}
\newtheorem{proposition}[theorem]{Proposition}
\newtheorem{remark}[theorem]{Remark}
\newtheorem{alphatheorem}{Theorem}
\crefname{alphatheorem}{theorem}{theorems}
\crefname{alphaproposition}{proposition}{propositions}
\newtheorem{alphacorollary}[alphatheorem]{Corollary}
\crefname{alphacorollary}{corollary}{corollaries}
\def\gitfootnote{\gdef\@thefnmark{}\@footnotetext}
\mathchardef\mhyphen="2D
\newcommand\dash{\nobreakdash-\hspace{0pt}}
\newcommand\hilbn[2]{\ensuremath{#2^{[#1]}}} 
\newcommand\nestedhilbn[3]{\ensuremath{#3^{[#1,#2]}}} 
\newcommand\alt{\ensuremath{\mathfrak{a}}}
\newcommand\bounded{\ensuremath{\mathrm{b}}}
\newcommand\FF{\ensuremath{\mathsf{F}}}
\newcommand\II{\ensuremath{\mathsf{I}}}
\newcommand\LL{\ensuremath{\mathrm{L}}}
\newcommand\LLL{\ensuremath{\mathbf{L}}}
\newcommand\RR{\ensuremath{\mathrm{R}}}
\newcommand\RRR{\ensuremath{\mathbf{R}}}
\newcommand\serre{\ensuremath{\mathbb{S}}}
\newcommand\sym{\ensuremath{\mathfrak{S}}}
\DeclareMathOperator\im{im}
\DeclareMathOperator\Bl{Bl}
\DeclareMathOperator\coh{coh}
\DeclareMathOperator\cone{cone}
\DeclareMathOperator\derived{\mathbf{D}}
\DeclareMathOperator\Ext{Ext}
\DeclareMathOperator\sExt{\mathcal{E}xt}
\DeclareMathOperator\FM{\Phi}
\DeclareMathOperator\HH{H}
\DeclareMathOperator\hh{h}
\DeclareMathOperator\Hom{Hom}
\DeclareMathOperator\RRRsHom{\mathbf{R}\mathcal Hom}
\DeclareMathOperator\identity{id}
\DeclareMathOperator\image{im}
\DeclareMathOperator\AJ{AJ}
\DeclareMathOperator\Jac{Jac}
\DeclareMathOperator\moduli{M}
\DeclareMathOperator\rk{rk}
\DeclareMathOperator\Sym{Sym}
\DeclareMathOperator\topdeg{topdeg}
\DeclareMathOperator\Aut{Aut}
\DeclareMathOperator\MM{\mathsf M}
\DeclareMathOperator\Res{Res}
\DeclareMathOperator\id{id}
\DeclareMathOperator\Pic{Pic}
\DeclareMathOperator\Sing{Sing}
\title{Derived categories of (nested) Hilbert schemes}
\author{Pieter Belmans \and Andreas Krug}
\begin{document}

\maketitle

\begin{abstract}
  In this paper we provide several results regarding the structure of derived categories of (nested) Hilbert schemes of points. We show that the criteria of Krug--Sosna and Addington for the universal ideal sheaf functor to be fully faithful resp.~a $\mathbb{P}$-functor are sharp. Then we show how to embed multiple copies of the derived category of the surface using these fully faithful functors. We also give a semiorthogonal decomposition for the nested Hilbert scheme of points on a surface, and finally we give an elementary proof of a semiorthogonal decomposition due to Toda for the symmetric product of a curve.
\end{abstract}


\section{Introduction}
Hilbert schemes of points on surfaces are a classical object of study, providing very explicit moduli spaces of sheaves with interesting links to resolution of singularities and representation theory.

In the past two decades their derived categories have been studied thoroughly, especially in the context of the derived McKay correspondence \cite{MR1824990,MR1839919}. They provide an important source of interesting behaviour for derived categories of smooth projective varieties, such as the construction of non-standard autoequivalences \cite{MR3477955,MR3455879,krug-nakajima} or interesting fully faithful functors \cite{MR3397451,MR3950704}. A good understanding of their derived categories, and functors relating them, has e.g.~led to a more abstract interpretation of their deformation theory \cite{MR3950704}.

In this paper we give several different results regarding the derived categories of Hilbert schemes of points on surfaces, and closely related varieties, namely nested Hilbert schemes and symmetric products of curves.

\paragraph{Converses}
First we will give two results which provide converses to important criteria in the literature, which are closely related to understanding the deformation theory of Hilbert schemes via fully faithful functors, and the construction of new auto-equivalences. The first result we discuss provides a converse to the fully faithfulness criterion \cite[theorem~1.2]{MR3397451}, where~$S$ is a smooth projective surface and~$\hilbn{n}{S}$ the associated Hilbert scheme of~$n$ points. Throughout the article we will denote~$\FF$ for the Fourier--Mukai functor~$\Phi_{\mathcal{I}}$, where~$\mathcal{I}$ is the ideal sheaf for the universal subscheme on~$S\times\hilbn{n}{S}$.
\begin{alphatheorem}
  \label{theorem:krug-sosna-converse}
  Let~$S$ be a smooth projective surface, and~$n\geq 2$. Then
  \begin{equation}
    \FF\colon\derived^\bounded(S)\to\derived^\bounded(\hilbn{n}{S})
  \end{equation}
  is fully faithful if and only if~$\HH^1(S,\mathcal{O}_S)=\HH^2(S,\mathcal{O}_S)=0$.
\end{alphatheorem}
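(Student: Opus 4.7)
The ``if'' direction is~\cite[Theorem~1.2]{MR3397451}. For the converse, I would assume $\FF$ is fully faithful and derive $\HH^1(S,\mathcal{O}_S) = \HH^2(S,\mathcal{O}_S) = 0$ by computing $\FF(\mathcal{O}_S)$ explicitly and comparing the endomorphism algebras $\Hom^0(\FF(\mathcal{O}_S),\FF(\mathcal{O}_S))$ and $\Hom^0(\mathcal{O}_S,\mathcal{O}_S) = k$.

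Concretely, applying $\RR p_{2*}$ to the universal short exact sequence $0 \to \mathcal{I} \to \mathcal{O}_{S \times \hilbn{n}{S}} \to \mathcal{O}_\Xi \to 0$ yields a distinguished triangle
\[\FF(\mathcal{O}_S) \to \RR p_{2*}\mathcal{O}_{S\times\hilbn{n}{S}} \to \mathcal{T},\]
where $\mathcal{T} := p_{2*}\mathcal{O}_\Xi$ is the tautological rank-$n$ bundle (higher direct images vanish since $\Xi\to\hilbn{n}{S}$ is finite flat), and by K\"unneth $\RR p_{2*}\mathcal{O}_{S\times\hilbn{n}{S}} \cong \bigoplus_{i=0}^{2} \HH^i(S,\mathcal{O}_S) \otimes_k \mathcal{O}_{\hilbn{n}{S}}[-i]$ as a direct sum in $\derived^\bounded(\hilbn{n}{S})$.

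Assuming this triangle admits a direct sum decomposition---equivalently, that the positive-degree K\"unneth components of the connecting map (a priori elements of $\HH^i(\hilbn{n}{S},\mathcal{T}) \otimes \HH^i(S,\mathcal{O}_S)^{\vee}$ for $i = 1, 2$) vanish---one obtains
\[\FF(\mathcal{O}_S) \cong \mathcal{T}'[-1] \oplus \HH^1(S,\mathcal{O}_S) \otimes_k \mathcal{O}_{\hilbn{n}{S}}[-1] \oplus \HH^2(S,\mathcal{O}_S) \otimes_k \mathcal{O}_{\hilbn{n}{S}}[-2],\]
where $\mathcal{T}' := \mathcal{T}/\mathcal{O}_{\hilbn{n}{S}}$ is a rank-$(n-1)$ bundle, non-zero for $n \geq 2$. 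Along this decomposition, the diagonal contribution to $\Hom^0(\FF(\mathcal{O}_S),\FF(\mathcal{O}_S))$ contains $\Hom^0(\mathcal{T}'[-1],\mathcal{T}'[-1]) \oplus \mathrm{End}_k(\HH^1(S,\mathcal{O}_S)) \oplus \mathrm{End}_k(\HH^2(S,\mathcal{O}_S))$, of total dimension at least $1 + h^1(S,\mathcal{O}_S)^2 + h^2(S,\mathcal{O}_S)^2$ (the identity on $\mathcal{T}'$ supplying the initial $1$, since $\hilbn{n}{S}$ is connected). Fully faithfulness forces $\dim_k \Hom^0(\FF(\mathcal{O}_S),\FF(\mathcal{O}_S)) = 1$, whence $h^1(S,\mathcal{O}_S) = h^2(S,\mathcal{O}_S) = 0$.

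The main obstacle is establishing the splitting of the distinguished triangle, i.e., showing that the positive-degree K\"unneth components of the connecting map vanish in the derived category. The induced maps on cohomology sheaves are manifestly zero (since $\RR^{\geq 1} p_{2*}\mathcal{O}_\Xi = 0$), but this does not automatically imply the corresponding derived classes in $\HH^i(\hilbn{n}{S},\mathcal{T})$ vanish; verifying the latter is the technical heart of the argument and likely requires either a Leray-type spectral sequence argument or a direct description of these higher $\Ext$-classes in terms of the canonical map $\mathcal{O}_{S \times \hilbn{n}{S}} \to \mathcal{O}_\Xi$. If a clean splitting proves elusive, one can instead extract the conclusion by a more delicate spectral sequence analysis of $\mathbf{R}\Hom(\FF(\mathcal{O}_S),\FF(\mathcal{O}_S))$ applied to the triangle, at the cost of more intricate bookkeeping.
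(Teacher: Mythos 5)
Your reduction to the splitting of the triangle $\FF(\mathcal{O}_S)\to\FF'(\mathcal{O}_S)\to\FF''(\mathcal{O}_S)$ is where the argument breaks down, and the problem is worse than a verification you have postponed: the splitting is false precisely in the cases you need to rule out. The degree\nobreakdash-$i$ K\"unneth component of $\HH^\bullet(S,\mathcal{O}_S)\otimes_k\mathcal{O}_{\hilbn{n}{S}}\to\mathcal{O}_S^{[n]}$ is a class in $\HH^i(S,\mathcal{O}_S)^\vee\otimes\HH^i(\hilbn{n}{S},\mathcal{O}_S^{[n]})$, and under the identification $\HH^\bullet(\hilbn{n}{S},\mathcal{O}_S^{[n]})\cong\HH^\bullet(S,\mathcal{O}_S)\otimes\Sym^{n-1}\HH^\bullet(S,\mathcal{O}_S)$ (formula \eqref{eq:2} with $A=B=\mathcal{O}_S$) it is the identity element of $\HH^i(S,\mathcal{O}_S)^\vee\otimes\HH^i(S,\mathcal{O}_S)$, hence nonzero exactly when $\HH^i(S,\mathcal{O}_S)\neq 0$. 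More decisively, the invariant you compute cannot detect the cohomology at all: for a K3 surface ($p_g=1$) Addington's computation gives $\FF^\RR\circ\FF\cong\bigoplus_{i=0}^{n-1}[-2i]$, so that $\Hom^0(\FF\mathcal{O}_S,\FF\mathcal{O}_S)\cong\bigoplus_{i=0}^{n-1}\Ext^{-2i}(\mathcal{O}_S,\mathcal{O}_S)\cong k$, and for an abelian surface ($q=2$) the decomposition of $\FF^\RR\circ\FF$ recalled in the proof of \cref{theorem:addington-converse} likewise yields $\Hom^0(\FF\mathcal{O}_S,\FF\mathcal{O}_S)\cong k$. So your claimed lower bound $1+q^2+p_g^2$ on $\dim_k\Hom^0(\FF\mathcal{O}_S,\FF\mathcal{O}_S)$ is simply wrong, and no repair confined to degree-zero endomorphisms of $\FF(\mathcal{O}_S)$ can work: in these examples the failure of fully faithfulness is only visible in $\Hom^{i}$ for $i>0$.

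Detecting it in positive degree is exactly what the paper does, and is the content of your deferred ``more delicate spectral sequence analysis''. The paper takes the closed formulas \eqref{eq:1}--\eqref{eq:4} from \cite{MR3205591} for the four graded spaces $\Hom^\bullet(\FF^{\prime}A,\FF^{\prime}B)$, $\Hom^\bullet(\FF^{\prime}A,\FF^{\prime\prime}B)$, $\Hom^\bullet(\FF^{\prime\prime}A,\FF^{\prime}B)$, $\Hom^\bullet(\FF^{\prime\prime}A,\FF^{\prime\prime}B)$, chooses test objects $A$ and $B$ (a sufficiently negative line bundle and a skyscraper sheaf, or two skyscrapers) so that the top degrees of these spaces are staggered, and chases the long exact sequences induced by the triangle \eqref{eq:triangle} to produce a nonzero class in $\Hom^{i}(\FF A,\FF B)$ with $i>0$ (namely $i=2n$, $n$, or $q+2$ in the successive steps $p_g=0$, $q\leq n-1$, $q=0$), contradicting $\Hom^\bullet(\FF A,\FF B)\cong\Hom^\bullet(A,B)$. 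If you pursue your fallback you will need those formulas and a positive-degree comparison in any case, at which point you are essentially reconstructing the paper's proof.
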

This strengthens \cite[theorem~1.2]{MR3397451} from an if to an if and only if. From now on we will also say that~$\mathcal{O}_S$ is an \emph{exceptional} object, if the vanishing~$\HH^1(S,\mathcal{O}_S)=\HH^2(S,\mathcal{O}_S)=0$ holds.

The second result is a converse to the criterion of Addington \cite[theorem~3.1]{MR3477955}. This result motivated the definition of a~$\mathbb{P}^n$\dash functor, which is an interesting way of obtaining auto-equivalences of a variety.
\begin{alphatheorem}
  \label{theorem:addington-converse}
  Let~$S$ be a smooth projective surface, and~$n\geq 2$. Then
  \begin{equation}
    \FF\colon\derived^\bounded(S)\to\derived^\bounded(\hilbn{n}{S})
  \end{equation}
  is either a~$\mathbb{P}^m$\dash functor for some~$m\geq 1$ or a spherical functor if and only if~$S$ is a K3 surface (in which case it is a~$\mathbb{P}^m$\dash functor for~$m=n-1$).
\end{alphatheorem}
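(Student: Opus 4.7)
The ``if'' direction is Addington's theorem \cite[theorem~3.1]{MR3477955}, so it suffices to prove the converse. Suppose $\FF$ is a $\mathbb{P}^m$\dash functor for some $m\geq 1$, or is spherical. As a first reduction, if $\mathcal{O}_S$ is exceptional then \cref{theorem:krug-sosna-converse} implies $\FF$ is fully faithful, whence $\RR\FF\circ\FF\cong\id$. This is incompatible with a $\mathbb{P}^m$\dash structure for $m\geq 1$ (which demands a nontrivial autoequivalence summand $H$) and with sphericality (which demands a nontrivial cotwist). We may therefore assume $\HH^1(S,\mathcal{O}_S)\neq 0$ or $\HH^2(S,\mathcal{O}_S)\neq 0$.

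The core of the argument is to compute $\RR\FF\circ\FF$ for an arbitrary smooth projective surface. Combining the universal sequence $0\to\mathcal{I}\to\mathcal{O}\to\mathcal{O}_{\mathcal{Z}}\to 0$ on $S\times\hilbn{n}{S}$ with Scala's resolution of $\mathcal{O}_{\mathcal{Z}}$ (equivalently, by extending the Krug--Sosna computation beyond the exceptional case), one obtains a Fourier--Mukai kernel on $S\times S$ whose cohomology is supported on the diagonal and is naturally expressed in terms of the vector spaces $\HH^i(S,\mathcal{O}_S)$ and $\HH^i(S,\omega_S)$ for $i=0,1,2$. In the exceptional case this collapses to the identity, recovering Krug--Sosna; in the K3 case it yields $\id\oplus[-2]\oplus[-4]\oplus\cdots\oplus[-2(n-1)]$, recovering Addington.

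It then remains to show that for any other surface, the resulting decomposition cannot be written as $\id\oplus H\oplus H^2\oplus\cdots\oplus H^m$ for an autoequivalence $H$. Here two ingredients combine. First, $\mathbb{P}^m$\dash functors and spherical functors both enforce a Serre compatibility of the form $\serre_{\hilbn{n}{S}}\circ\FF\cong\FF\circ\serre_S\circ H^{-m}$ (with the cotwist replacing $H^{-m}$ in the spherical case), which one extracts from the standard identity $\LL\FF\cong\serre_S^{-1}\circ\RR\FF\circ\serre_{\hilbn{n}{S}}$ together with the relation $\LL\FF\cong H^m\circ\RR\FF$ built into the $\mathbb{P}^m$\dash or spherical structure. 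Evaluating this on skyscraper sheaves $\mathcal{O}_x$ and comparing the action on the canonical class forces $\omega_S\cong\mathcal{O}_S$, so $S$ is either a K3 or an abelian surface. The abelian case is then excluded by the $\Lambda^*\HH^1(S,\mathcal{O}_S)$\dash contributions to $\RR\FF\circ\FF$ identified in the previous step: they introduce odd-degree shifts that cannot be organised as iterates $H^k$ of a single autoequivalence. I expect the main obstacle to be this second step, namely carrying out the computation of $\RR\FF\circ\FF$ in sufficient detail outside the exceptional case to isolate the abelian-surface summands and rule them out structurally.
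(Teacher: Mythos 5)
Your overall strategy matches the paper's at the level of its two main steps: extract a Serre-functor compatibility $\serre_{\hilbn{n}{S}}\circ\FF\circ D\cong\FF\circ\serre_S$ common to spherical and $\mathbb{P}$\dash functors, deduce $\omega_S\cong\mathcal{O}_S$ so that $S$ is K3 or abelian, and then eliminate the abelian case using the known shape of $\FF^\RR\circ\FF$ there. But two points in your sketch are genuine problems. First, the computation you call the core of the argument --- that $\FF^\RR\circ\FF$ for an arbitrary surface has Fourier--Mukai kernel ``supported on the diagonal'' --- is false whenever $\HH^1(S,\mathcal{O}_S)\neq 0$: the paper's own Section 3 (third lemma) produces, for $q=n-1$, a nonzero $\Hom^{n}(\FF k(x),\FF k(y))$ with $x\neq y$, so the kernel has off-diagonal cohomology in general. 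Fortunately this computation is not actually needed; the paper only ever invokes the explicit form of $\FF^\RR\circ\FF$ for abelian surfaces, where it is in the literature.

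Second, and more seriously, ``evaluating on skyscraper sheaves and comparing the action on the canonical class'' cannot by itself force $\omega_S\cong\mathcal{O}_S$, because at that stage you have no control over what the unknown autoequivalence $D$ (equal to $H^m$, or the cotwist $C$) does to $k(x)$. The missing idea --- and the real content of the paper's proof --- is the identification of $D$. The paper constructs a left inverse $\II=(-)^{\sym_2}\circ\MM_{\alt_2}\circ\Res_{\sym_2}^{\sym_n}\circ\LLL\delta^*\circ\Psi^{-1}$ of $\FF$ via the derived McKay correspondence, shows $\II\circ\serre_{\hilbn{n}{S}}\cong\serre_S^{n}\circ\II$, and concludes $D\cong\serre_S^{-(n-1)}$, whence $D(k(x))\cong k(x)[-2(n-1)]$. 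Only then does the compatibility give $\FF(k(x))\otimes\omega_{\hilbn{n}{S}}\cong\FF(k(x))$, and one still needs $\FF(k(x))|_{U_x}\cong\mathcal{O}_{U_x}$ on the open set $U_x$ with codimension-$2$ complement, normality of $\hilbn{n}{S}$, and the injectivity of $L\mapsto\mathcal{D}_L$ together with $\mathcal{D}_{\omega_S}\cong\omega_{\hilbn{n}{S}}$ to descend to $\omega_S\cong\mathcal{O}_S$. Finally, your reason for excluding abelian surfaces (odd-degree shifts cannot be iterates of a single $H$) is not the right obstruction --- $H=[-1]$ would produce odd shifts --- the point is rather the multiplicities in $\id\oplus[-1]^{\oplus 2}\oplus\dots\oplus[-2n]$: the summand playing the role of $H$ would have to be $[-1]^{\oplus 2}$, which is not an autoequivalence, and likewise the cotwist is decomposable, hence not an autoequivalence.
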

This strengthens \cite[theorem~3.1]{MR3477955}, which shows~$\FF$ is a~$\mathbb{P}^{n-1}$\dash functor if~$S$ is a~K3~surface, from an if to an if and only if. Remark that for any surface there are~$\mathbb{P}^{n-1}$\dash functors~$\derived^\bounded(S)\to\derived^\bounded(\hilbn{n}{S})$, but they are constructed differently, see \cref{remark:Pn-functors-abound} for more information.

\paragraph{Multiple copies} The third result gives an \emph{extension} of the fully faithfulness result of \cref{theorem:krug-sosna-converse}. Given a line bundle $L\in \Pic(S)$, there is a natural way to get an associated line bundle $\mathcal{D}_L\in \Pic(\hilbn{n}{S})$; see \cite{MR0335512} or \cref{subsection:hilbert} for details.

\begin{alphatheorem}
  \label{corollary:sod-surfaces}
  Let~$n\geq 3$. Let~$S$ be a smooth projective surface, such that~$\mathcal{O}_S$ is exceptional. Let
  \begin{equation}
    \derived^\bounded(S)
    =
    \left\langle
      L_1,\ldots,L_m,\mathcal{A}
    \right\rangle
  \end{equation}
  be a semiorthogonal decomposition, with~$L_i$ line bundles and~$\mathcal{A}$ some (possibly empty) complement to the exceptional collection. Then we have an induced semiorthogonal decomposition
  \begin{equation}
    \label{equation:sod-multiple}
    \derived^\bounded(\hilbn{n}{S})
    =
    \left\langle
      \FF(\derived^\bounded(S))\otimes \mathcal{D}_{L_1},
      \ldots,
      \FF(\derived^\bounded(S))\otimes \mathcal{D}_{L_m},
      \mathcal{B}
    \right\rangle
  \end{equation}
  where~$\mathcal{B}$ is defined as the complement.
\end{alphatheorem}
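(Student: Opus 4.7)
The plan is to verify that the $m$ subcategories $\FF(\derived^\bounded(S))\otimes\mathcal{D}_{L_i}$ are admissible in $\derived^\bounded(\hilbn{n}{S})$ and mutually semi-orthogonal in the prescribed order, and then to obtain $\mathcal{B}$ as the right orthogonal complement. Full faithfulness of each $\FF(-)\otimes\mathcal{D}_{L_i}$ is immediate from \cref{theorem:krug-sosna-converse} (which applies since $\mathcal{O}_S$ is exceptional) combined with the fact that $(-)\otimes\mathcal{D}_{L_i}$ is an autoequivalence of $\derived^\bounded(\hilbn{n}{S})$; admissibility is automatic because $\FF$ admits both adjoints.

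For the semi-orthogonality between blocks $i\neq j$ the claim reduces, via the autoequivalence $(-)\otimes\mathcal{D}_{L_j}^{-1}$ and the multiplicativity $\mathcal{D}_{L_i}\otimes\mathcal{D}_{L_j}^{-1}=\mathcal{D}_M$ with $M:=L_i\otimes L_j^{-1}$, to a single universal vanishing
\begin{equation*}
  \Hom_{\hilbn{n}{S}}^*(\FF(A),\FF(B)\otimes\mathcal{D}_M)=0 \qquad\text{for all } A,B\in\derived^\bounded(S),
\end{equation*}
under the cohomology hypothesis $\HH^*(S,M)=0$. The latter is automatic, because $\HH^*(S,M)$ coincides with one of the vanishing $\Hom^*$ groups between the line bundles $L_i$ that is guaranteed by the semi-orthogonality of the original decomposition on $S$.

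The remaining and essential task is this Hom-vanishing. By adjunction it amounts to showing that the endofunctor $\FF^R\circ\bigl((-)\otimes\mathcal{D}_M\bigr)\circ\FF$ on $\derived^\bounded(S)$ is zero whenever $\HH^*(S,M)=0$. My approach is to compute its Fourier--Mukai kernel on $S\times S$ by applying the projection formula to the universal ideal sheaf together with the explicit description of $\mathcal{D}_M$ (as the descent of an equivariant line bundle along the Bridgeland--King--Reid equivalence, or equivalently through the tautological determinant bundle); the resulting kernel should be built formally out of $\HH^*(S,M)$ together with cohomology of $\mathcal{O}_S$, and the simultaneous vanishing of these cohomology groups (the second by the exceptionality of $\mathcal{O}_S$) forces triviality. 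This generalises the monad computation underlying \cref{theorem:krug-sosna-converse} and is the principal obstacle. Granting it, $\mathcal{B}$ is simply defined to be the right orthogonal to the $m$ admissible blocks, completing the semi-orthogonal decomposition \eqref{equation:sod-multiple}.
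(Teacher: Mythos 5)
Your overall architecture matches the paper's: full faithfulness of each block from \cref{theorem:krug-sosna-converse} plus the autoequivalence $(-)\otimes\mathcal{D}_{L_i}$, admissibility from the existence of adjoints, and then a single Hom-vanishing for the semiorthogonality. But that Hom-vanishing is exactly the substance of the theorem, and you leave it as a sketch (``the resulting kernel \emph{should} be built formally out of $\HH^\bullet(S,M)$ together with cohomology of $\mathcal{O}_S$ \dots\ Granting it''). This is a genuine gap, and the heuristic you offer for closing it is not correct as stated. The paper closes it by invoking Krug's explicit formulae \eqref{eq:D1}--\eqref{eq:D4} for $\Hom^\bullet(\FF^{\prime}A\otimes\mathcal{D}_L,\FF^{\prime\prime}B\otimes\mathcal{D}_M)$ etc., and then running the triangle $\FF\to\FF^{\prime}\to\FF^{\prime\prime}$. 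Those formulae show that the relevant Hom-spaces are \emph{not} built purely out of $\HH^\bullet(S,M)$ and $\HH^\bullet(S,\mathcal{O}_S)$: they are direct sums of terms of the shape $(\text{Homs involving }A,B,L,M)\otimes\Sym^{i}\Hom^\bullet(L,M)$ with $i\in\{n,n-1,n-2\}$, where the first tensor factor (e.g.\ $\Hom^\bullet(A\otimes L,B\otimes M)$ in \eqref{eq:D4}) has no reason to vanish. The vanishing comes entirely from $\Sym^{i}\Hom^\bullet(L,M)=0$ for $i\geq 1$, which is where the hypothesis $n\geq 3$ enters: one needs $n-2\geq 1$ to kill the $\Sym^{n-2}$ summand of \eqref{eq:D4}. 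Your argument never uses $n\geq 3$, and indeed the semiorthogonality fails for $n=2$ precisely because $\Sym^{0}\Hom^\bullet(L,M)=k$ leaves the term $\Hom^\bullet(A\otimes L,M)\otimes\Hom^\bullet(L,B\otimes M)$ alive. A related sign that the intended computation is off: exceptionality of $\mathcal{O}_S$ plays no role in the semiorthogonality step (only in full faithfulness), contrary to your suggestion that the vanishing of $\HH^{\geq 1}(S,\mathcal{O}_S)$ is needed to force the kernel to be trivial.

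Your preliminary reductions are fine: the multiplicativity $\mathcal{D}_{L_i}\otimes\mathcal{D}_{L_j}^{-1}\cong\mathcal{D}_{L_i\otimes L_j^{-1}}$ holds since $L\mapsto\mathcal{D}_L$ is a group homomorphism, and $\HH^\bullet(S,L_i\otimes L_j^{-1})=\Hom^\bullet(L_j,L_i)=0$ for $j>i$ is indeed the semiorthogonality on $S$. (The paper does not even need this normalisation to $L=\mathcal{O}_S$, since the formulae of \cite{MR3205591} are stated for arbitrary pairs of line bundles.) To complete your proof you should replace the kernel computation by a citation of, or a proof of, those formulae, and make the role of $n\geq 3$ explicit.
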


\paragraph{Symmetric powers of curves} Next we give an elementary proof of a recent result of Toda which describes the derived category of the symmetric powers~$C^{(n)}$ of a curve~$C$ \cite[corollary~5.11]{1805.00183v2}. Here we rely on the classical geometry of the Abel--Jacobi map and Schwarzenberger's description of the symmetric powers, together with a recent general result in homological projective geometry due to Jiang--Leung \cite{1811.12525v2} generalising Orlov's projective bundle formula to not necessarily locally free sheaves\footnote{Jiang--Leung have independently included a proof of \cref{theorem:toda} as an application of their projective bundle formula in the second version of their preprint. It is now the content of \cite[corollary~3.8]{1811.12525v2}.}.

\begin{alphatheorem}[Toda]
  \label{theorem:toda}
  For $n=g,\ldots,2g-2$, there exists a semiorthogonal decomposition
  \begin{equation}
    \label{equation:symsod}
    \derived^\bounded(C^{(n)})
    =
    \big\langle
      \underbrace{
        \derived^\bounded(\Jac C),
        \ldots,
        \derived^\bounded(\Jac C)
      }_{n-g+1},
      \derived^\bounded(C^{(2g-2-n)})
    \big\rangle.
  \end{equation}
\end{alphatheorem}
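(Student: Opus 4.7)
The plan is to realise the Abel--Jacobi morphism $\AJ_n\colon C^{(n)}\to\Jac C$ as a projectivization (in the sense of Jiang--Leung) of the derived pushforward of a Poincar\'e line bundle, apply their generalisation of Orlov's projective bundle formula, and identify the residual component with $\derived^\bounded(C^{(2g-2-n)})$ by means of Grothendieck--Serre duality.

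First I would fix a Poincar\'e line bundle $\mathcal{L}$ on $C\times\Jac C$ of relative degree $n$ (using a basepoint to identify $\Pic^n(C)$ with $\Jac C$) and denote by $\pi\colon C\times\Jac C\to\Jac C$ the second projection. Since fibrewise cohomology lives only in degrees $0$ and $1$, $R\pi_*\mathcal{L}$ is perfect of amplitude $[0,1]$ on the smooth variety $\Jac C$ and can be represented by a two-term complex $\sigma\colon\mathcal{F}^0\to\mathcal{F}^1$ of locally free sheaves with $\rk\mathcal{F}^0-\rk\mathcal{F}^1=\chi(L)=n-g+1$. Schwarzenberger's description, combined with cohomology and base change, then identifies $C^{(n)}\subset\mathbb{P}(\mathcal{F}^0)$ as the vanishing locus of the section of $p^*\mathcal{F}^1\otimes\mathcal{O}(1)$ induced by $\sigma$: outside the Brill--Noether loci $\sigma$ is surjective and $\AJ_n$ recovers the projective bundle $\mathbb{P}(\ker\sigma)$, while along the stratification by Brill--Noether loci the jump of $R^1\pi_*\mathcal{L}$ exactly matches the jump in fibre dimension of $\AJ_n$.

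Feeding $\sigma$ into the projective bundle formula of Jiang--Leung \cite{1811.12525v2}, applied on the smooth ambient $\Jac C$, then produces the expected pattern
\begin{equation}
  \derived^\bounded(C^{(n)})
  =
  \big\langle
    \underbrace{\derived^\bounded(\Jac C),\ldots,\derived^\bounded(\Jac C)}_{n-g+1},\
    \derived^\bounded(X^{\vee})
  \big\rangle,
\end{equation}
where $X^{\vee}$ is the dual projectivization attached to the transposed complex $\sigma^{\vee}\colon(\mathcal{F}^1)^{\vee}\to(\mathcal{F}^0)^{\vee}$.

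The principal obstacle is the identification $X^{\vee}\cong C^{(2g-2-n)}$. For this I would invoke Grothendieck--Serre duality along $\pi$: since the relative dualising sheaf is $p_1^*\omega_C$ and the fibres are one-dimensional,
\begin{equation}
  (R\pi_*\mathcal{L})^{\vee}\cong R\pi_*\bigl(\mathcal{L}^{-1}\otimes p_1^*\omega_C\bigr)[1],
\end{equation}
so $\sigma^{\vee}$ represents, up to shift, the complex $R\pi_*(\mathcal{L}^{-1}\otimes p_1^*\omega_C)$. The line bundle $\mathcal{L}^{-1}\otimes p_1^*\omega_C$ has relative degree $2g-2-n$, and under the translation $\Pic^n(C)\cong\Pic^{2g-2-n}(C)$, $[L]\mapsto[\omega_C\otimes L^{-1}]$, it is identified with a Poincar\'e line bundle on $C\times\Pic^{2g-2-n}(C)$. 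Re-applying the same Schwarzenberger identification in this degree then rewrites $X^{\vee}$ as $C^{(2g-2-n)}$ fibred over $\Pic^{2g-2-n}(C)\cong\Jac C$ via $\AJ_{2g-2-n}$, completing the decomposition. The delicate part is to track the Grothendieck--Serre duality, base change, and normalisation of Poincar\'e bundles so that the dual projectivization produced by Jiang--Leung's formula matches precisely the Schwarzenberger model of $C^{(2g-2-n)}$.
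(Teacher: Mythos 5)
Your proposal follows essentially the same route as the paper: Schwarzenberger's Picard-sheaf description of the Abel--Jacobi map, Jiang--Leung's generalised projective bundle formula, and Grothendieck--Verdier duality along $C\times\Jac C\to\Jac C$ to identify the residual component with $C^{(2g-2-n)}$. The difference is only one of packaging. You start from the degree-$n$ Poincar\'e bundle and a two-term complex $\sigma\colon\mathcal{F}^0\to\mathcal{F}^1$ representing $\RRR\pi_*\mathcal{L}$, so that $C^{(n)}=\mathbb{P}(\mathrm{coker}\,\sigma^\vee)$ and the residual piece is $\mathbb{P}\bigl(\sExt^1(\mathrm{coker}\,\sigma^\vee,\mathcal{O}_J)\bigr)=\mathbb{P}(\RR^1\pi_*\mathcal{L})$; the paper starts directly from the sheaf $\mathcal{G}=\theta^*\mathcal{F}_{2g-2-n}$, which is $\RR^1p_*$ of the Serre-dual Poincar\'e bundle and hence equal to your $\mathrm{coker}\,\sigma^\vee$, and proves $\sExt^1(\mathcal{G},\mathcal{O}_J)\cong\mathcal{F}_n$ by exactly the duality computation you sketch. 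The two arguments are mirror images of each other.

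The one genuine gap is that you apply the Jiang--Leung formula unconditionally. As stated, their theorem requires that $\mathbb{P}(\mathcal{G})$ and $\mathbb{P}(\sExt^1(\mathcal{G},\mathcal{O}_J))$ be irreducible of the expected dimensions $g+r-1$ and $g-r-1$, where $r=\rk\mathcal{G}=n-g+1$, and that $\Sing(\mathcal{G})$ have the expected codimension $r+1$ in $\Jac C$. The first two are immediate from $\dim C^{(n)}=n$ and $\dim C^{(2g-2-n)}=2g-2-n$, but the third is not a formal consequence of having a two-term complex of the right ranks: it is equivalent to $\dim\im(\AJ_{2g-2-n})=2g-2-n$, i.e.\ to the classical fact that the Abel--Jacobi map $C^{(m)}\to\Jac C$ is birational onto its image for $m\leq g-1$. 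Your remark that the jump of $\RR^1\pi_*\mathcal{L}$ matches the jump in fibre dimension of $\AJ_n$ does not substitute for this, since it says nothing about the dimension of the top degeneracy stratum $\im(\AJ_{2g-2-n})$ itself. This verification is short but necessary --- it occupies the second half of the paper's proof --- and without it the hypotheses of the projectivization formula are not established.
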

Here~$n=g,\ldots,2g-2$ constitutes the non-trivial range. For~$n\geq 2g-1$ the symmetric power is the projectivisation of a locally free sheaf. Hence there is a semiorthogonal decomposition of~$\derived^\bounded(C^{(n)})$ with all components equivalent to~$\derived^\bounded(\Jac C)$, and no contribution from lower-dimensional symmetric powers of the curve. For~$n\leq g-1$ the derived category is expected to be indecomposable. So \eqref{equation:symsod} would be a decomposition into indecomposable pieces. For the proof of \cref{theorem:toda} and more context, see \cref{section:sod-toda}.

\paragraph{Nested Hilbert schemes}
Finally we describe the derived categories of \emph{nested} Hilbert schemes of points. These are usually considered as tools to study the geometry of Hilbert schemes of points, but it turns out that their derived categories also exhibit an interesting phenomenon.

The proof of this decomposition follows in a straightforward way from the above-mentioned recent result of Jiang--Leung, and we have included it mostly to exhibit a large family of situations in which the Jiang--Leung result can be applied. For more details and an explicit description of the functors, see \cref{section:sod-nested}.

\begin{alphatheorem}
  \label{theorem:sod-nested}
  Let~$S$ be a smooth projective surface. Then we have a semiorthogonal decomposition
  \begin{equation}
    \derived^\bounded(S^{[n-1,n]})
    =
    \left\langle
      \derived^\bounded(S\times S^{[n-1]}),
      \derived^\bounded(S^{[n-2,n-1]})
    \right\rangle.
  \end{equation}
\end{alphatheorem}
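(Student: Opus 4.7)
The plan is to realise $S^{[n-1,n]}$ as the projectivisation of a non-locally-free coherent sheaf over $S \times S^{[n-1]}$ and then apply the Jiang--Leung projectivisation formula directly.

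First, I would recall the classical presentation
\begin{equation}
  S^{[n-1,n]} \cong \mathbb{P}_{S \times S^{[n-1]}}(\mathcal{I}_{\Xi_{n-1}}),
\end{equation}
where $\Xi_{n-1} \subset S \times S^{[n-1]}$ is the universal subscheme and $\mathcal{I}_{\Xi_{n-1}}$ is its ideal sheaf; under this identification, the structural morphism $\pi \colon S^{[n-1,n]} \to S \times S^{[n-1]}$ sends a nested pair $Z' \subset Z$ to $(x, Z')$, where $x$ is the residual point of $Z$ modulo $Z'$. I would then invoke the classical ``residual'' isomorphism
\begin{equation}
  \Xi_{n-1} \cong S^{[n-2, n-1]},\qquad (x, Z') \mapsto (W \subset Z'),
\end{equation}
where $W$ denotes the subscheme of $Z'$ residual to $x$. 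Since $S^{[n-2, n-1]}$ is smooth by a theorem of Cheah, $\Xi_{n-1}$ is a smooth codimension-$2$ regularly embedded subvariety of the smooth ambient space $S \times S^{[n-1]}$.

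Next, I would apply the Jiang--Leung projectivisation formula \cite[corollary~3.8]{1811.12525v2} to $\pi$. The Koszul resolution of $\mathcal{O}_{\Xi_{n-1}}$ exhibits $\mathcal{I}_{\Xi_{n-1}}$ as a perfect complex of rank $1$ and Tor-amplitude $[0,1]$, which are exactly the hypotheses needed, and the formula produces a semiorthogonal decomposition
\begin{equation}
  \derived^\bounded\bigl(\mathbb{P}(\mathcal{I}_{\Xi_{n-1}})\bigr) = \left\langle \derived^\bounded(S \times S^{[n-1]}),\; \derived^\bounded(\Xi_{n-1}) \right\rangle,
\end{equation}
whose first component is obtained by pullback along $\pi$ (suitably twisted) and whose second is Jiang--Leung's ``derived dual projectivisation'' piece. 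For a regular codimension-$2$ ideal this latter piece simply equals $\derived^\bounded$ of the subvariety itself---this is the codimension-$2$ instance of Orlov's blow-up formula, reflecting the identification $\mathbb{P}(\mathcal{I}_{\Xi_{n-1}}) = \Bl_{\Xi_{n-1}}(S \times S^{[n-1]})$. Substituting the residual isomorphism $\Xi_{n-1} \cong S^{[n-2,n-1]}$ then yields the desired decomposition.

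The main technical point is verifying the hypotheses of Jiang--Leung and pinning down its ``correction'' piece as $\derived^\bounded(\Xi_{n-1})$; both reduce, via the regular codimension-$2$ embedding, to the comfortable setting of Orlov's blow-up formula, which is why the statement follows in the ``straightforward way'' advertised in the introduction. If one wanted an explicit description of the functors realising the two components (as the paper promises in \cref{section:sod-nested}), the real work would be tracking the natural fully faithful embeddings through Jiang--Leung's construction; but for the bare semiorthogonal decomposition nothing beyond the above is needed.
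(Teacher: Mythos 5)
Your overall architecture matches the paper's: realise $\gamma_n = q_n\times f_n\colon S^{[n-1,n]}\to S\times S^{[n-1]}$ as the blowup of $S\times S^{[n-1]}$ in the universal subscheme $Z_{n-1}$ (equivalently as $\mathbb{P}(\mathcal{I}_{Z_{n-1}})$) and apply the Jiang--Leung formula. But there is a genuine error at the decisive step: the ``residual isomorphism'' $Z_{n-1}\cong S^{[n-2,n-1]}$ is false as soon as $n-1\geq 3$. The universal subscheme $Z_{n-1}\subset S\times S^{[n-1]}$ is singular in that range (it is Cohen--Macaulay of codimension~$2$, but not a local complete intersection at points $(x,Z')$ where $x$ occurs in $Z'$ with multiplicity $\geq 2$), whereas $S^{[n-2,n-1]}$ is smooth by Cheah's theorem. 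The residual construction $(W\subset Z')\mapsto (Z'\setminus W,\,Z')$ is only a resolution of singularities $S^{[n-2,n-1]}\to Z_{n-1}$, with positive-dimensional fibres over the singular locus, not an isomorphism. Consequently $Z_{n-1}$ is not regularly embedded, the Koszul resolution you invoke does not exist, and the reduction ``to the comfortable setting of Orlov's blow-up formula'' is exactly what cannot work here: the whole reason for using Jiang--Leung is that Orlov's formula is inapplicable because the centre of the blowup is singular.

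In Jiang--Leung's statement the second component is not $\derived^\bounded(Z_{n-1})$ (which is not even a reasonable category to put in a semiorthogonal decomposition of a smooth variety) but $\derived^\bounded(\widetilde{Z}_{n-1})$, where $\widetilde{Z}_{n-1}=\mathbb{P}\bigl(\sExt^1(\mathcal{I}_{Z_{n-1}},\mathcal{O}_{S\times S^{[n-1]}})\bigr)$ is the Springer-type resolution sitting inside $\mathbb{P}(\mathcal{F}^\vee)$ for a Hilbert--Burch resolution $0\to\mathcal{F}\to\mathcal{E}\to\mathcal{I}_{Z_{n-1}}\to 0$. The two missing ingredients are therefore: (i) that $Z_{n-1}$ is Cohen--Macaulay of codimension~$2$, which the paper takes from Fogarty, and (ii) the nontrivial identification $\mathbb{P}\bigl(\sExt^1(\mathcal{I}_{Z_{n-1}},\mathcal{O})\bigr)\cong S^{[n-2,n-1]}$, which the paper takes from \cite[theorem~2]{MR1830227}; neither follows from the (false) smoothness of the centre. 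Your argument is correct for $n=2,3$, where $Z_{n-1}$ happens to be smooth, but not for general $n$.
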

Hence by induction one can obtain the following corollary.
\begin{alphacorollary}
  Let~$S$ be a smooth projective surface. Then for~$n\geq 2$ we have a semiorthogonal decomposition
  \begin{equation}
    \label{equation:corollary-nested}
    \derived^\bounded(S^{[n-1,n]})
    =
    \left\langle
      \derived^\bounded(S\times S^{[n-1]}),
      \derived^\bounded(S\times S^{[n-2]}),
      \ldots,
      \derived^\bounded(S)
    \right\rangle
  \end{equation}
\end{alphacorollary}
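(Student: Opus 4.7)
The plan is a direct induction on $n$, with \cref{theorem:sod-nested} supplying both the base ingredient and the inductive step. For the base case $n=2$, applying \cref{theorem:sod-nested} produces
\begin{equation}
  \derived^\bounded(S^{[1,2]}) = \left\langle \derived^\bounded(S \times S^{[1]}), \derived^\bounded(S^{[0,1]}) \right\rangle,
\end{equation}
and the identifications $S^{[1]} \cong S$ and $S^{[0,1]} \cong S$ (the latter because the subscheme $Z_0$ in a flag $Z_0 \subset Z_1$ must be the empty subscheme, so that $S^{[0,1]}$ is cut out by the single point $Z_1 \in S^{[1]}$) turn this into \eqref{equation:corollary-nested} for $n=2$.

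For the inductive step, assume \eqref{equation:corollary-nested} for some value $n-1 \geq 2$. Then \cref{theorem:sod-nested} provides
\begin{equation}
  \derived^\bounded(S^{[n-1,n]}) = \left\langle \derived^\bounded(S \times S^{[n-1]}), \derived^\bounded(S^{[n-2,n-1]}) \right\rangle,
\end{equation}
and substituting the semiorthogonal decomposition of the second component furnished by the inductive hypothesis yields \eqref{equation:corollary-nested} at once. This uses only the standard and completely formal fact that refining a component of a semiorthogonal decomposition by a further semiorthogonal decomposition produces a semiorthogonal decomposition of the ambient triangulated category.

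I do not anticipate a genuine obstacle: the corollary is a mechanical unrolling of \cref{theorem:sod-nested}. If one wanted explicit Fourier--Mukai kernels for the embeddings appearing in \eqref{equation:corollary-nested}, one would need to track how the embedding functors produced in the proof of \cref{theorem:sod-nested} compose under iteration, but the existence of the decomposition requires nothing beyond the induction described above.
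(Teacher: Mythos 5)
Your proposal is correct and is exactly the argument the paper intends: the paper gives no separate proof of the corollary beyond the remark that it follows ``by induction'' from \cref{theorem:sod-nested}, and your unrolling --- base case $S^{[0,1]}\cong S$ plus the formal refinement of the second component at each step --- is that induction carried out. Nothing further is needed.
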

There are no cohomological conditions on the surface, and e.g.~when~$S$ is a K3~surface or an abelian surface, the decomposition \eqref{equation:corollary-nested} is in terms of indecomposable derived categories.

\paragraph{Acknowledgements}
We would like to thank Nick Addington, Theo Raedschelders and Lie Fu for interesting discussions. In particular, \cref{theorem:krug-sosna-converse} answers a question posed by Lie Fu.

We also want to thank Qingyuan Jiang and Naichung Conan Leung for informing us about their proof of \cref{theorem:toda} in the second version of their preprint.

The first author was supported by the Max Planck Institute for Mathematics and the University of Bonn.
The second author was supported by the University of Marburg.

\section{Preliminaries}
\label{section:preliminaries}
In this section we briefly recall some of the notation and constructions we will use throughout this article.

We denote~$k$ for the field~$\mathbb{C}$ of complex numbers. Some of the results in the literature that we are referring to are only proven using complex geometric methods, but we expect that all results in this article are valid for an algebraically closed field of characteristic~0.

We will abbreviate the bounded derived category of coherent sheaves~$\derived^\bounded(\coh X)$ on a smooth projective variety to~$\derived^\bounded(X)$. If~$X$ and~$Y$ are smooth projective varieties, and~$\mathcal{P}\in\derived^\bounded(X\times Y)$, then the associated \emph{Fourier--Mukai transform} is defined as
\begin{equation}
  \FM_{\mathcal{P}}=\FM_{\mathcal{P}}^{X\to Y}\colon\derived^\bounded(X)\to\derived^\bounded(Y):\mathcal{E}\mapsto\RRR q_*(p^*\mathcal{E}\otimes^\LLL\mathcal{P})
\end{equation}
where~$p$ and~$q$ denote the projections
\begin{equation}
  \begin{tikzcd}
    & X\times Y \arrow[ld, swap, "p"] \arrow[rd, "q"] \\
    X & & Y.
  \end{tikzcd}
\end{equation}

The left (resp.~right) adjoint of a Fourier--Mukai transform~$\Phi_{\mathcal{P}}$ is again a Fourier--Mukai transform, with kernel
\begin{equation}
  \mathcal{P}^{\mathrm{L}}\coloneqq\mathcal{P}^\vee\otimes^\LLL q^*\omega_Y[\dim Y]
\end{equation}
resp.
\begin{equation}
  \mathcal{P}^{\mathrm{R}}\coloneqq\mathcal{P}^\vee\otimes^\LLL p^*\omega_X[\dim X].
\end{equation}

\subsection{Induced sheaves and objects on Hilbert schemes}
\label{subsection:hilbert}
Throughout the article~$S$ will be a smooth projective surface, and~$\hilbn{n}{S}$ denotes the Hilbert scheme of~$n$ points on~$S$, which is a smooth projective variety of dimension $2n$; see \cite[theorem~2.4]{MR0237496}.

Let~$Z_n\subset S\times \hilbn{n}{S}$ be the universal family of length $n$ subschemes. Then we have a short exact sequence
\begin{equation}
  0\to\mathcal{I}_{Z_n}\to \mathcal{O}_{S\times \hilbn{n}{S}}\to \mathcal{O}_{Z_n}\to 0
\end{equation}
on~$S\times \hilbn{n}{S}$. We will denote the induced exact triangle of Fourier--Mukai transforms by
\begin{align}
  \label{eq:triangle}
  \FF\to \FF^{\prime}\to \FF^{\prime\prime}\to\FF[1],
\end{align}
where
\begin{itemize}
  \item $\FF=\FM_{\mathcal{I}_{Z_n}}$;
  \item $\FF^{\prime}=\FM_{\mathcal{O}_{S\times \hilbn{n}{S}}}\cong \HH^\bullet(S,-)\otimes_k \mathcal{O}_{\hilbn{n}{S}}$;
  \item $\FF^{\prime\prime}=\FM_{\mathcal{O}_{Z_n}}$.
\end{itemize}
In the literature, the objects in the image of $\FF^{\prime\prime}$ are called \emph{tautological objects} and denoted by $B^{[n]}\coloneqq\FF^{\prime\prime}(B)$ for $B\in \derived^\bounded(S)$.


Let $S^{(n)}\coloneqq S^n/\sym_n$ denote the symmetric product, let $\pi\colon S^n\to S^{(n)}$ be the quotient morphism, and let $\mu\colon S^{[n]}\to S^{(n)}$ be the Hilbert--Chow morphism.
By \cite[proposition~4.6]{MR0335512}, there is an injective group homomorphism
\begin{equation}
 \Pic(S)\to \Pic(\hilbn{n}{S})\quad,\quad L\mapsto \mathcal{D}_L\coloneqq\mu^*\bigl((\pi_* L^{\boxtimes n})^{\sym_n} \bigr)\,.
\end{equation}

\subsection{Formulae for Hom-spaces}
The results summarised in this subsection will be used in \cref{sect:KSconverse} and \cref{section:multiple-copies}.

By \cite{MR3205591}, we have the following formulae for the graded Hom-spaces between the above objects on the Hilbert scheme:
\begin{align}
  \Hom^\bullet(\FF^{\prime}A\otimes \mathcal{D}_L,\FF^{\prime}B \otimes \mathcal{D}_M)&\cong\HH^\bullet(S,A)^\vee\otimes \HH^\bullet(S,B)\otimes\Sym^n\Hom^\bullet(L,M) \label{eq:D1} \\
  \Hom^\bullet(\FF^{\prime}A \otimes \mathcal{D}_L,\FF^{\prime\prime}B \otimes \mathcal{D}_M)&\cong\HH^\bullet(S,A)^\vee\otimes \Hom^\bullet(L,B\otimes M)\otimes\Sym^{n-1}\Hom^\bullet(L,M) \label{eq:D2} \\
  \Hom^\bullet(\FF^{\prime\prime}A \otimes \mathcal{D}_L,\FF^{\prime}B \otimes \mathcal{D}_M)&\cong\Hom^\bullet(A\otimes L, M)\otimes \HH^\bullet(S,B)\otimes\Sym^{n-1}\Hom^\bullet(L,M) \label{eq:D3} \\
  \Hom^\bullet(\FF^{\prime\prime}A \otimes \mathcal{D}_L,\FF^{\prime\prime}B \otimes \mathcal{D}_M)&\cong\left( \Hom^\bullet(A\otimes L,B\otimes M)\otimes\Sym^{n-1}\Hom^\bullet(L,M) \right) \label{eq:D4} \\
  &\mkern-200mu \oplus\left( \Hom^\bullet(A\otimes L,M)\otimes \Hom^\bullet(L,B\otimes M)\otimes\Sym^{n-2}\Hom^\bullet(L,M) \right). \nonumber
\end{align}
For \eqref{eq:D3} and \eqref{eq:D4}, see \cite[theorem~3.17]{MR3205591}. For \eqref{eq:D1}, see \cite[remark~3.21]{MR3205591}. For \eqref{eq:D2}, see \cite[remark~3.20]{MR3205591}, which is a straight-forward generalisation of \cite[corollary~35]{MR2481854}. Here,~$A^\vee$ stands for the derived dual and~$\HH^\bullet(S,A)^\vee$ stands for the graded dual of the graded vector space $\HH^\bullet(S,A)\coloneqq\bigoplus_{i\in\mathbb{Z}}\HH^i(S,A)[-i]$. This means that if, for example, $\HH^\bullet(S,A)$ is concentrated in degrees~$0$, $1$, and~$2$, the dual~$\HH^\bullet(S,A)^\vee$ is concentrated in degrees~$-2$, $-1$, and~$0$. Furthermore, $\Sym^n\HH^\bullet(S,\mathcal{O}_S)$ denotes the symmetric power in the graded sense, which means that
\begin{equation}
  \Sym^n\HH^\bullet(S,\mathcal{O}_S)=\bigoplus_{i+j=n}\Sym^i\HH^{\mathrm{even}}(S,\mathcal{O}_S)\otimes_k \bigwedge\nolimits^j \HH^{\mathrm{odd}}(S,\mathcal{O}_S).
\end{equation}
We have~$\mathcal{D}_{\mathcal{O}_S}\cong \mathcal{O}_{\hilbn{n}{S}}$. Hence by setting~$L=\mathcal{O}_S=M$, the formulae \eqref{eq:D1}--\eqref{eq:D4} specialise to the following, as summarised in \cite[theorem~6.1]{MR3391883}:
\begin{align}
  \Hom^\bullet(\FF^{\prime}A,\FF^{\prime}B)&\cong\HH^\bullet(S,A)^\vee\otimes \HH^\bullet(S,B)\otimes\Sym^n\HH^\bullet(S,\mathcal{O}_S) \label{eq:1} \\
  \Hom^\bullet(\FF^{\prime}A,\FF^{\prime\prime}B)&\cong\HH^\bullet(S,A)^\vee\otimes \HH^\bullet(S,B)\otimes\Sym^{n-1}\HH^\bullet(S,\mathcal{O}_S) \label{eq:2} \\
  \Hom^\bullet(\FF^{\prime\prime}A,\FF^{\prime}B)&\cong\HH^\bullet(S,A^\vee)\otimes \HH^\bullet(S,B)\otimes\Sym^{n-1}\HH^\bullet(S,\mathcal{O}_S) \label{eq:3} \\
  \Hom^\bullet(\FF^{\prime\prime}A,\FF^{\prime\prime}B)&\cong\left( \Hom^\bullet(A,B)\otimes\Sym^{n-1}\HH^\bullet(S,\mathcal{O}_S) \right) \label{eq:4} \\
  &\mkern-100mu \oplus\left( \HH^\bullet(S,A^\vee)\otimes \HH^\bullet(S,B)\otimes\Sym^{n-2}\HH^\bullet(S,\mathcal{O}_S) \right)\,. \nonumber
\end{align}

\subsection{Equivariant sheaves and the Bridgeland--King--Reid--Haiman equivalence}
\label{subsection:bkr-h}
The results summarised in this subsection will be used in \cref{sect:Addconverse}.

Let~$G$ be a finite group acting on a smooth projective variety~$X$.
In our applications, the group~$G$ will be the symmetric group~$\sym_n$.
We will recall a few facts about (the category of)~$G$\dash equivariant sheaves on~$X$, and its derived category. For further details, we refer to \cite[section~4]{MR1824990}, \cite[section~2.2]{MR3788855}. A \emph{$G$\dash equivariant} sheaf on $X$ is a pair~$(E,\lambda)$ where~$E$ is a coherent sheaf on~$X$ and~$(\lambda_g\colon E\xrightarrow{\sim} g^*E)_{g\in G}$ is a \emph{$G$\dash linearisation}, i.e.\ a collection of isomorphisms such that for every $h,g\in G$ the following diagram commutes:
\begin{equation}
  \begin{tikzcd}
    E \arrow[r, "\lambda_g"] \arrow[rrr, bend right, "\lambda_{hg}"] & g^*(E) \arrow[r, "g^*\lambda_h"] & g^*\circ h^*(E) \arrow[r, "\simeq"] & (h\circ g)^*(E)
  \end{tikzcd}
\end{equation}
We denote the abelian category of~$G$\dash equivariant sheaves by~$\coh_G X$ and its bounded derived category by~$\derived^\bounded_G(X)\coloneqq\derived^\bounded(\coh_G X)$. Equivariant shaves can be canonically identified with coherent sheaves on the corresponding quotient stack~$[X/G]$, i.e.\ we have equivalences~$\coh_G X\cong \coh [X/G]$ and~$\derived^\bounded_G(X)\cong \derived^\bounded([X/G])$.

If~$H\le G$ is a subgroup we have the functor
\begin{equation}
  \Res^H_G\colon \coh_G X\to \coh_H X
\end{equation}
restricting $G$\dash linearisations to $H$\dash linearisations.

Let~$\chi$ be a one-dimensional representation of~$G$, which we identify with a character~$\chi\colon G\to k^\times$. Then there is an induced autoequivalence
\begin{equation}
  \MM_\chi\colon\coh_G X\to \coh_G X
\end{equation}
sending an equivariant sheaf~$(E,\lambda)$ to~$(E,\lambda^\chi)$, where~$\lambda_g^\chi=\chi(g)\cdot \lambda_g$. We will later consider the special case~$\MM_{\alt_2}$, where~$\alt_2$ is the unique non-trivial character of~$\sym_2$.

If the group~$G$ acts trivially on~$X$, a~$G$\dash linearisation of a sheaf~$E$ is the same as a~$G$\dash action on the sheaf. Hence in this case every~$G$\dash equivariant sheaf~$(E,\lambda)$ has a well-defined subsheaf of~$G$\dash invariants~$E^G\subset E$. This defines a functor
\begin{equation}
  (-)^G\colon \coh_G X\to \coh X.
\end{equation}

All three of the functors~$\Res$, $\MM_\chi$, and $(-)^G$ are exact, hence they induce functors between the equivariant derived categories.

Let now~$G$ act on two smooth projective varieties~$X$ and~$Y$. Then every~$G$\dash equivariant morphism~$f\colon X\to Y$ induces a pullback~$f^*\colon \coh_G Y\to \coh_G X$,
as well as a derived version $\LLL f^*\colon \derived^\bounded_G(Y)\to \derived^\bounded_G(X)$.
A special case that we will consider later is the following. Let~$S$ be a smooth projective surface. We consider~$\sym_n$ acting trivially on~$S$ and by permutation of the factors on~$S^n$. Then, the embedding of the small diagonal
\begin{equation}
  \delta\colon S\hookrightarrow S^n:x\mapsto (x,\dots,x)
\end{equation}
is~$\sym_n$\dash equivariant. Hence we get a pullback~$\LLL\delta^*\colon \derived^\bounded_{\sym_n}(S^n)\to \derived^\bounded(S)$.

To define the derived McKay correspondence for Hilbert schemes of points on surface, we consider the diagram
\begin{equation}
  \begin{tikzcd}
    \mathrm{I}^nS \arrow[r, "p"] \arrow[d, swap, "q"] & S^n \arrow[d, "\pi"] \\
    \hilbn{n}{S} \arrow[r, "\mu"] & S^{(n)}
  \end{tikzcd}
\end{equation}
where~$\mathrm{I}^nS\coloneqq(\hilbn{n}{S}\times_{S^{(n)}}S^n)_{\mathrm{red}}$ is the reduced fibre product, also called the \emph{isospectral Hilbert scheme}, and~$p$ and~$q$ are the projections.
With this notation,
\begin{equation}
  \Psi\coloneqq (-)^{\sym_n}\circ \RRR q_*\circ \LLL p^* \colon \derived^\bounded_{\sym_n}(S^n)\to \derived^\bounded(\hilbn{n}{S})
\end{equation}
is an equivalence, called \emph{derived McKay correspondence}; see \cite{MR1824990}, \cite{MR1839919}, \cite[proposition~2.8]{MR3788855}.

\section{A converse to the fully faithfulness criterion of Krug--Sosna}\label{sect:KSconverse}
In this section we prove \cref{theorem:krug-sosna-converse}. As pointed out in the introduction, one direction has been proven in \cite[theorem~1.2]{MR3397451}, so it suffices to show that if~$\FF$ is fully faithful, then~$\HH^1(S,\mathcal{O}_S)=\HH^2(S,\mathcal{O}_S)=0$.

We will denote~$q\coloneqq\hh^1(S,\mathcal{O}_S)$ and~$p_g\coloneqq\hh^2(S,\mathcal{O}_S)$, so that we have to show that~$p_g=q=0$. This is done by exhibiting objects~$A,B$ in~$\derived^\bounded(S)$ for which we can compute~$\Ext_{\hilbn{n}{S}}^i(\FF A,\FF B)$ explicitly and compare it to~$\Ext_S^i(A,B)$, to obtain a contradiction unless~$p_g=q=0$. We do this in the following lemmas.

If~$V^\bullet$ is a graded vector space, then we will denote
\begin{equation}
  \topdeg(V^\bullet)\coloneqq\max\{i\in\mathbb{Z}\mid V^i\neq0\}.
\end{equation}

\begin{lemma}
  Assume that~$\FF$ is fully faithful. Then~$p_g=0$.
\end{lemma}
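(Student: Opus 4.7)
The plan is to prove the contrapositive: assuming $p_g > 0$, I will exhibit a non-zero class in $\Ext^{2n}_{\hilbn{n}{S}}(\FF\mathcal{O}_S, \FF\mathcal{O}_S)$, which contradicts full faithfulness since the latter would force $\Ext^{2n}_{\hilbn{n}{S}}(\FF\mathcal{O}_S, \FF\mathcal{O}_S) \cong \Ext^{2n}_S(\mathcal{O}_S, \mathcal{O}_S) = \HH^{2n}(S, \mathcal{O}_S) = 0$, vanishing because $2n > \dim S = 2$ for $n \geq 2$. The clean way to access this top Ext is Serre duality on the smooth projective $2n$-dimensional variety $\hilbn{n}{S}$:
\begin{equation*}
\Ext^{2n}_{\hilbn{n}{S}}(\FF\mathcal{O}_S, \FF\mathcal{O}_S) \cong \Hom_{\hilbn{n}{S}}(\FF\mathcal{O}_S, \FF\mathcal{O}_S \otimes \omega_{\hilbn{n}{S}})^\vee,
\end{equation*}
so the task reduces to constructing a non-zero morphism $\FF\mathcal{O}_S \to \FF\mathcal{O}_S \otimes \omega_{\hilbn{n}{S}}$.

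The construction is immediate once one has a non-zero global section of $\omega_{\hilbn{n}{S}}$, which the hypothesis $p_g > 0$ supplies. Using the functoriality of $L \mapsto \mathcal{D}_L$ recalled in \cref{subsection:hilbert} and the standard formula $\omega_{\hilbn{n}{S}} = \mathcal{D}_{\omega_S}$, any non-zero section $s \in \HH^0(S, \omega_S)$ induces a non-zero injection $\mathcal{O}_{\hilbn{n}{S}} = \mathcal{D}_{\mathcal{O}_S} \hookrightarrow \mathcal{D}_{\omega_S} = \omega_{\hilbn{n}{S}}$, and tensoring with $\FF\mathcal{O}_S$ yields the required morphism. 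The main step needing care is to confirm that this tensored map is non-zero in $\derived^\bounded(\hilbn{n}{S})$, and this is the only technical obstacle; I would handle it by a cohomology-sheaf computation. The cohomology long exact sequence of \eqref{eq:triangle} at $\mathcal{O}_S$, combined with the fact that $\FF''\mathcal{O}_S = \mathcal{O}_S^{[n]}$ is a sheaf and $\mathcal{H}^2(\FF'\mathcal{O}_S) = \HH^2(S, \mathcal{O}_S) \otimes \mathcal{O}_{\hilbn{n}{S}}$, gives $\mathcal{H}^2(\FF\mathcal{O}_S) \cong \mathcal{O}_{\hilbn{n}{S}}^{\oplus p_g}$. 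Tensoring the injection $\mathcal{O}_{\hilbn{n}{S}} \hookrightarrow \omega_{\hilbn{n}{S}}$ with this non-zero free sheaf remains injective, so the induced map on $\mathcal{H}^2$ is non-zero; hence the morphism itself is non-zero in $\Hom_{\derived^\bounded(\hilbn{n}{S})}$, and Serre duality completes the contradiction.
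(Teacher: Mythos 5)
Your proof is correct, but it takes a genuinely different route from the paper's. The paper works with the graded Hom-formulae \eqref{eq:1}--\eqref{eq:4}, takes $A$ a sufficiently negative line bundle and $B$ a skyscraper sheaf, and chases top degrees through the long exact sequences coming from \eqref{eq:triangle} to land a nonzero class in $\Hom^{2n}(\FF A,\FF B)$. You instead dualise: by Serre duality $\Ext^{2n}(\FF\mathcal{O}_S,\FF\mathcal{O}_S)\cong\Hom(\FF\mathcal{O}_S,\FF\mathcal{O}_S\otimes\omega_{\hilbn{n}{S}})^\vee$, and you exhibit an explicit nonzero morphism coming from a section of $\omega_{\hilbn{n}{S}}\cong\mathcal{D}_{\omega_S}$, detected on $\mathcal{H}^2$. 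The steps all check out: $\mathcal{H}^2(\FF\mathcal{O}_S)\cong\mathcal{O}_{\hilbn{n}{S}}^{\oplus p_g}$ follows from the cohomology-sheaf sequence of \eqref{eq:triangle} because $\FF''\mathcal{O}_S=\mathcal{O}_S^{[n]}$ is a sheaf and $R^1q_*\mathcal{O}_{Z_n}=0$; tensoring an injection of line bundles with a nonzero free sheaf stays injective; and a derived-category morphism inducing a nonzero map on some $\mathcal{H}^i$ is nonzero; finally $\HH^{2n}(S,\mathcal{O}_S)=0$ since $2n>2$. Two details you should make explicit: the ``functoriality'' of $L\mapsto\mathcal{D}_L$ on sections is not actually recorded in \cref{subsection:hilbert} (only the group homomorphism on $\Pic$ is), but the invariant section $s^{\boxtimes n}$ of $\omega_S^{\boxtimes n}$ descends to $S^{(n)}$ and pulls back along the birational morphism $\mu$ to a nonzero section of $\mathcal{D}_{\omega_S}$; and the isomorphism $\omega_{\hilbn{n}{S}}\cong\mathcal{D}_{\omega_S}$ needs a reference (the paper itself invokes it only later, in the proof of \cref{prop:omegaO}). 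As for what each approach buys: yours bypasses the Hom-space formulae entirely for this lemma and is more geometric; the paper's argument is uniform with the three subsequent lemmas that force $q=0$, which your Serre-duality trick does not obviously reach, since $\HH^1(S,\mathcal{O}_S)\neq 0$ produces no global section of $\omega_{\hilbn{n}{S}}$.
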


\begin{proof}
  Assume on the contrary that~$p_g\neq 0$. This implies that~$\topdeg \Sym^i\HH^\bullet(S,\mathcal{O}_S)=2i$ for all~$i\in\mathbb{N}$.

  We set~$A$ to be a line bundle on~$S$ such that~$\HH^\bullet(S,A^\vee)$ is concentrated in degree 0 (we can take~$A^\vee$ to be a sufficiently high power of an ample line bundle), and~$B\coloneqq k(x)$ to be the skyscraper sheaf of some point~$x\in S$. Then we have
  \begin{equation}
    \left.
    \begin{array}{r}
      \topdeg \HH^\bullet(S,B) \\
      \topdeg \HH^\bullet(S,A)^\vee \\
      \topdeg \HH^\bullet(S,A^\vee) \\
      \topdeg \Hom^\bullet(A,B)
    \end{array}
    \right\}
    =0
  \end{equation}
  Using \eqref{eq:1}--\eqref{eq:4}, this implies
  \begin{equation}
    \begin{aligned}
      \topdeg \Hom^\bullet(\FF^{\prime}A,\FF^{\prime}B)&=2n \\
      \left.
      \begin{array}{c}
        \topdeg \Hom^\bullet(\FF^{\prime}A,\FF^{\prime\prime}B) \\
        \topdeg \Hom^\bullet(\FF^{\prime\prime}A,\FF^{\prime}B) \\
        \topdeg \Hom^\bullet(\FF^{\prime\prime}A,\FF^{\prime\prime}B)
      \end{array}
      \right\}
      &=2(n-1).
    \end{aligned}
  \end{equation}
  By the exact sequence
  \begin{equation}
    0=\Hom^{2n}(\FF^{\prime\prime}A,\FF^{\prime}B)\to \Hom^{2n}(\FF^{\prime}A, \FF^{\prime}B)\to \Hom^{2n}(\FF A, \FF^{\prime}B)
  \end{equation}
  we get that~$\Hom^{2n}(\FF A, \FF^{\prime}B)\neq 0$. On the other hand, the exact sequence
  \begin{equation}
    0=\Hom^{2n}(\FF^{\prime}A,\FF^{\prime\prime}B)\to \Hom^{2n}(\FF A,\FF^{\prime\prime}B)\to \Hom^{2n+1}(\FF^{\prime\prime}A,\FF^{\prime\prime}B)=0
  \end{equation}
  shows that~$\Hom^{2n}(\FF A,\FF^{\prime\prime}B)=0$. Hence the exact sequence
  \begin{equation}
    \Hom^{2n}(\FF A,\FF B)\to \Hom^{2n}(\FF A,\FF^{\prime}B)\to \Hom^{2n}(\FF A,\FF^{\prime\prime}B)=0
  \end{equation}
  gives~$\Hom^{2n}(\FF A,\FF B)\neq 0$. But if~$\FF$ were fully faithful, $\Hom^\bullet(\FF A,\FF B)\cong \Hom^\bullet(A,B)$ would be concentrated in degree~$0$. This shows that~$p_g=0$.
\end{proof}

We still need to show that~$q=0$.

\begin{lemma}
  Assume that~$\FF$ is fully faithful. Then~$q\leq n-1$.
\end{lemma}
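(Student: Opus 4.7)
Following the template of the previous lemma, I want to exhibit a pair $A, B \in \derived^\bounded(S)$ and an integer $i > \topdeg \Hom^\bullet(A, B)$ such that $\Hom^i(\FF A, \FF B) \ne 0$, contradicting full faithfulness.

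Take $A = k(x)$ for a closed point $x \in S$ and $B = \mathcal{O}_S$. Using $p_g = 0$ from the previous lemma: $\HH^\bullet(S, A) = k$ in degree $0$; $\HH^\bullet(S, A^\vee) = k$ in degree $2$, since $A^\vee = k(x)[-2]$ on the smooth surface $S$; $\HH^\bullet(S, B) = k \oplus k^q[-1]$; and $\Hom^\bullet(A, B) = k$ in degree $2$. Assuming $q \ge n$ for contradiction, $\Sym^{n-1} \HH^\bullet(S, \mathcal{O}_S)$ has nonzero top piece $\bigwedge^{n-1} \HH^1(S, \mathcal{O}_S)$ of dimension $\binom{q}{n-1}$ in degree $n-1$. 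Substituting into \eqref{eq:1}--\eqref{eq:4} one reads off
\begin{align*}
\topdeg \Hom^\bullet(\FF^{\prime}A, \FF^{\prime}B) &= n+1, \\
\topdeg \Hom^\bullet(\FF^{\prime}A, \FF^{\prime\prime}B) &= n, \\
\topdeg \Hom^\bullet(\FF^{\prime\prime}A, \FF^{\prime}B) &= n+2, \\
\topdeg \Hom^\bullet(\FF^{\prime\prime}A, \FF^{\prime\prime}B) &= n+1,
\end{align*}
so $\Hom^\bullet(\FF^{\prime\prime}A, \FF^{\prime}B)$ is uniquely maximal, with nonzero top piece $\HH^2(A^\vee) \otimes \HH^1(B) \otimes \bigwedge^{n-1}\HH^1(\mathcal{O}_S)$ of dimension $q\binom{q}{n-1}$.

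Cascading this nonvanishing through the long exact sequences from \eqref{eq:triangle}---as in the previous lemma---I get $\Hom^{n+1}(\FF A, \FF^{\prime}B) \twoheadrightarrow \Hom^{n+2}(\FF^{\prime\prime}A, \FF^{\prime}B) \ne 0$ (since $\Hom^{n+2}(\FF^{\prime}A, \FF^{\prime}B)$ vanishes by top degree), then $\Hom^{n+1}(\FF A, \FF^{\prime\prime}B) = 0$ (sandwiched between terms vanishing by top-degree reasons), and finally $\Hom^{n+1}(\FF A, \FF B) \twoheadrightarrow \Hom^{n+1}(\FF A, \FF^{\prime}B) \ne 0$. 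But full faithfulness and $n \ge 2$ force $\Hom^{n+1}(\FF A, \FF B) = \Ext^{n+1}(k(x), \mathcal{O}_S) = 0$, since $\Ext^\bullet(k(x),\mathcal{O}_S)$ is concentrated in degree $2$, giving a contradiction and hence $q \le n-1$. The main work will be the book-keeping of the top degrees via \eqref{eq:1}--\eqref{eq:4} and the propagation through the two long exact sequences, which is routine once the right $(A,B)$ is chosen.
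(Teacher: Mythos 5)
Your proof is correct and follows the same template as the paper's: assume $q\geq n$, compute the top degrees of the four Hom-spaces via \eqref{eq:1}--\eqref{eq:4}, and cascade through the long exact sequences from \eqref{eq:triangle} to produce a nonzero $\Hom^i(\FF A,\FF B)$ in a degree where $\Hom^i(A,B)$ vanishes. The only difference is the choice of test pair --- the paper takes $A$ a line bundle with $\HH^\bullet(S,A^\vee)$ concentrated in degree $0$ and $B=k(x)$, landing the contradiction in degree $n$, while you take $A=k(x)$, $B=\mathcal{O}_S$ and land it in degree $n+1$ --- and all your degree bookkeeping checks out.
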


\begin{proof}
  Assume on the contrary that~$q\geq n$. By the previous lemma we have that~$p_g=0$, hence~$\topdeg\Sym^i\HH^\bullet(S,\mathcal{O}_S)=i$ for all~$i\leq n$. Setting as above~$A$ to be a line bundle with~$\HH^\bullet(S,A^\vee)$ being concentrated in degree~$0$ and~$B=k(x)$, we get
  \begin{equation}
    \begin{aligned}
      \topdeg\Hom^\bullet(\FF^{\prime}A,\FF^{\prime}B)&=n \\
      \left.
      \begin{array}{r}
        \topdeg\Hom^\bullet(\FF^{\prime}A,\FF^{\prime\prime}B) \\
        \topdeg\Hom^\bullet(\FF^{\prime\prime}A,\FF^{\prime}B) \\
        \topdeg\Hom^\bullet(\FF^{\prime\prime}A,\FF^{\prime\prime}B)
      \end{array}
      \right\}
      &=n-1.
    \end{aligned}
  \end{equation}
  By the exact sequence
  \begin{equation}
    0=\Hom^{n}(\FF^{\prime\prime}A,\FF^{\prime}B)\to \Hom^{n}(\FF^{\prime}A, \FF^{\prime}B)\to \Hom^{n}(\FF A, \FF^{\prime}B)
  \end{equation}
  we get that~$\Hom^{n}(\FF A, \FF^{\prime}B)\neq 0$. On the other hand, the exact sequence
  \begin{equation}
    0=\Hom^{n}(\FF^{\prime}A,\FF^{\prime\prime}B)\to \Hom^{n}(\FF A,\FF^{\prime\prime}B)\to \Hom^{n+1}(\FF^{\prime\prime}A,\FF^{\prime\prime}B)=0
  \end{equation}
  shows that~$\Hom^{n}(\FF A,\FF^{\prime\prime}B)=0$. Hence the exact sequence
  \begin{equation}
    \Hom^{n}(\FF A,\FF B)\to \Hom^{n}(\FF A,\FF^{\prime}B)\to \Hom^{n}(\FF A,\FF^{\prime\prime}B)=0
  \end{equation}
  gives~$\Hom^{n}(\FF A,\FF B)\neq 0$. But if~$\FF$ were fully faithful, $\Hom^\bullet(\FF A,\FF B)\cong \Hom^\bullet(A,B)$ would be concentrated in degree~$0$. This shows that~$q\leq n-1$.
\end{proof}

We can rule out another case by similar methods as follows.

\begin{lemma}
  Assume that~$\FF$ is fully faithful. Then~$q\leq n-2$.
\end{lemma}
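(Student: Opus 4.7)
The plan is to assume $q = n-1$---the only case left given $p_g = 0$ and $q \leq n-1$ from the previous two lemmas---and to derive a contradiction via an Euler-characteristic calculation. I would use the same test objects as before: a line bundle $A$ whose dual $A^\vee$ is a sufficiently high power of an ample line bundle (so that $\HH^\bullet(S, A^\vee)$ is concentrated in degree~$0$ and $\hh^0(S, A^\vee)$ can be made arbitrarily large), together with $B \coloneqq k(x)$.

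The reason the previous strategy must be modified is that when $\dim\HH^1(S,\mathcal{O}_S) = n-1$, both $\Sym^n \HH^\bullet(S,\mathcal{O}_S)$ and $\Sym^{n-1}\HH^\bullet(S,\mathcal{O}_S)$ have top degree $n-1$, so the four Hom-spaces in \eqref{eq:1}--\eqref{eq:4} can no longer be separated at the top by degree alone. Recognising this and passing from top degrees to Euler characteristics is the main conceptual obstacle; the pay-off is a clean cancellation forced by the binomial identity $\sum_{j=0}^{k}(-1)^j\binom{n-1}{j} = (-1)^k\binom{n-2}{k}$.

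Concretely, setting $s_i \coloneqq \chi(\Sym^i \HH^\bullet(S, \mathcal{O}_S))$, this identity together with $p_g = 0$ and $q = n-1$ gives $s_n = s_{n-1} = 0$ and $s_{n-2} = (-1)^{n-2}$. Applying $\chi$ to the triangle \eqref{eq:triangle} for both $A$ and $B$ and using multiplicativity of $\chi$ under tensor products, the formulae \eqref{eq:1}--\eqref{eq:4} combine into
\begin{equation*}
  \chi\bigl(\Hom^\bullet(\FF A, \FF B)\bigr) = \chi(A)(s_n - s_{n-1}) + \bigl(1 - \chi(A^\vee)\bigr)\,s_{n-1} + \chi(A^\vee)\,s_{n-2} = (-1)^{n-2}\,\hh^0(S, A^\vee).
\end{equation*}
Since $\chi(\Hom^\bullet(A, B)) = \chi(k(x)) = 1$, full faithfulness of $\FF$ would force $\hh^0(S, A^\vee) = (-1)^{n-2} \leq 1$, contradicting the choice of $A^\vee$ as a sufficiently high power of an ample line bundle, for which $\hh^0(S, A^\vee) \geq 2$. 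This yields $q \leq n-2$.
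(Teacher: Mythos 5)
Your argument is correct, but it takes a genuinely different route from the paper's. The paper stays inside the top-degree framework of the two preceding lemmas and instead changes test objects: it takes $A=k(x)$ and $B=k(y)$ for two \emph{distinct} points, so that $\Hom^\bullet(A,B)=0$ kills the first summand of \eqref{eq:4}, while the factor $\HH^\bullet(S,A^\vee)$, now concentrated in degree $2$, pushes $\topdeg\Hom^\bullet(\FF^{\prime\prime}A,\FF^{\prime}B)$ up to $n+1$, strictly above $\topdeg\Hom^\bullet(\FF^{\prime}A,\FF^{\prime}B)=\topdeg\Hom^\bullet(\FF^{\prime}A,\FF^{\prime\prime}B)=n-1$; chasing the triangle \eqref{eq:triangle} then yields $\Hom^{n}(\FF A,\FF B)\neq 0$, contradicting $\Hom^\bullet(A,B)=0$. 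You keep the test objects of the earlier lemmas and replace the degree comparison by an Euler-characteristic computation, exploiting that for $p_g=0$ and $q=n-1$ one has $\chi(\Sym^{n}\HH^\bullet(S,\mathcal{O}_S))=\chi(\Sym^{n-1}\HH^\bullet(S,\mathcal{O}_S))=0$ and $\chi(\Sym^{n-2}\HH^\bullet(S,\mathcal{O}_S))=(-1)^{n-2}$; I have checked your inclusion--exclusion over the triangle and the resulting identity $\chi\bigl(\Hom^\bullet(\FF A,\FF B)\bigr)=(-1)^{n-2}\hh^0(S,A^\vee)$, and the contradiction with $\hh^0(S,A^\vee)\geq 2$ is sound. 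The paper's choice keeps all four lemmas stylistically parallel and isolates a single nonvanishing degree, which is arguably more transparent; yours avoids introducing a second point $y$ and turns the borderline case $q=n-1$ into a purely numerical degeneration, at the mild cost of having to note that all graded Hom-spaces involved are bounded and finite-dimensional so that $\chi$ is additive on \eqref{eq:triangle}.
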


\begin{proof}
  By the previous two lemmas we have that~$p_g=0$ and that~$q\leq n-1$. Assume that~$q=n-1$. Then we have
  \begin{equation}
    \begin{aligned}
      \left.
        \begin{array}{c}
          \topdeg\Sym^n\HH^\bullet(S,\mathcal{O}_S) \\
          \topdeg\Sym^{n-1}\HH^\bullet(S,\mathcal{O}_S)
        \end{array}
      \right\}
      &=n-1 \\
      \topdeg\Sym^{n-2}\HH^\bullet(S,\mathcal{O}_S)&=n-2.
    \end{aligned}
  \end{equation}
  We set~$A=k(x)$ and~$B=k(y)$ to be skyscraper sheaves of two different points~$x,y$ of~$S$. Then
  \begin{equation}
    \begin{aligned}
      \topdeg\HH^\bullet(S,A^\vee)&=2 \\
      \left.
      \begin{array}{c}
        \topdeg\HH^\bullet(S,A)^\vee \\
        \topdeg\HH^\bullet(S,B)
      \end{array}
      \right\}
      &=0 \\
    \end{aligned}
  \end{equation}
  and
  \begin{equation}
    \Hom^\bullet(A,B)=0.
  \end{equation}
  Hence by \eqref{eq:1}--\eqref{eq:4},
  \begin{equation}
    \begin{aligned}
      \topdeg\Hom^\bullet(\FF^{\prime\prime}A,\FF^{\prime}B)&=n+1 \\
      \topdeg\Hom^\bullet(\FF^{\prime\prime}A,\FF^{\prime\prime}B)&=n \\
      \left.
      \begin{array}{c}
        \topdeg\Hom^\bullet(\FF^{\prime}A,\FF^{\prime}B) \\
        \topdeg\Hom^\bullet(\FF^{\prime}A,\FF^{\prime\prime}B)
      \end{array}
      \right\}
      &=n-1
    \end{aligned}
  \end{equation}
  The exact sequence
  \begin{equation}
    \Hom^{n+1}(\FF^{\prime\prime}A,\FF B)\to \Hom^{n+1}(\FF^{\prime\prime} A,\FF^{\prime}B)\to \Hom^{n+1}(\FF^{\prime\prime}A,\FF^{\prime\prime}B)=0
  \end{equation}
  shows that~$\Hom^{n+1}(\FF^{\prime\prime}A,\FF B)\neq0$. The exact sequence
  \begin{equation}
    0=\Hom^{n}(\FF^{\prime}A,\FF^{\prime\prime} B)\to \Hom^{n+1}(\FF^{\prime} A,\FF B)\to \Hom^{n+1}(\FF^{\prime}A,\FF^{\prime}B)=0
  \end{equation}
  gives~$\Hom^{n+1}(\FF^{\prime}A,\FF B)=0$. Hence the exact sequence
  \begin{equation}
    \Hom^{n}(\FF A,\FF B)\to \Hom^{n+1}(\FF^{\prime\prime} A,\FF B)\to \Hom^{n+1}(\FF^{\prime}A,\FF B)=0
  \end{equation}
  shows that~$\Hom^{n}(\FF A,\FF B)\neq 0$. But if~$\FF$ were fully faithful, we would have
  \begin{equation}
    \Hom^\bullet(\FF A,\FF B)\cong \Hom^\bullet(A,B)=0.
  \end{equation}
  This shows that~$q\leq n-2$.
\end{proof}

\begin{lemma}
  Assume that~$\FF$ is fully faithful. Then~$q=0$.
\end{lemma}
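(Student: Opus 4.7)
The plan is to continue the strategy of the preceding lemmas: choose a test pair $(A, B)$ and chase graded $\Hom$-spaces through the long exact sequences induced by the triangle \eqref{eq:triangle} and the formulas \eqref{eq:1}--\eqref{eq:4}. Under our standing hypotheses $p_g = 0$ and $q \leq n-2$, the Poincar\'e polynomial of $\Sym^m \HH^\bullet(S, \mathcal{O}_S)$ equals $(1+t)^q$ for each $m \in \{n-2, n-1, n\}$, so the three symmetric factors on the right hand sides of \eqref{eq:1}--\eqref{eq:4} contribute identically and any asymmetry among the four Hom-spaces has to come from $A$ and $B$. Assuming for contradiction that $q \geq 1$, the candidate test pair is $A = B = k(x)$, the skyscraper at a single point $x \in S$. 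The advantage of $A = B$ over distinct skyscrapers is that $\Hom^\bullet(A, B) \cong \Lambda^\bullet T_x S$ introduces an extra factor $(1+t)^2$ in \eqref{eq:4}.

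Using $\HH^\bullet(S, A^\vee) \cong k[-2]$ and $\HH^\bullet(S, A)^\vee \cong k \cong \HH^\bullet(S, B)$, the formulas \eqref{eq:1}--\eqref{eq:4} give Hom-spaces with Poincar\'e polynomials
\begin{equation*}
  (1+t)^q,\quad (1+t)^q,\quad t^2 (1+t)^q,\quad \bigl((1+t)^2 + t^2\bigr)(1+t)^q;
\end{equation*}
in top degree $q+2$ the four dimensions are therefore $0, 0, 1, 2$. To extract $\Hom^{q+2}(\FF A, \FF B)$ I would chase two long exact sequences. First, applying $\Hom^\bullet(\FF^{\prime\prime} A, -)$ to $\FF B \to \FF^{\prime} B \to \FF^{\prime\prime} B$ produces a terminal segment $k \to k^2 \to \Hom^{q+3}(\FF^{\prime\prime} A, \FF B) \to 0$, so $\Hom^{q+3}(\FF^{\prime\prime} A, \FF B) \neq 0$. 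Second, applying $\Hom^\bullet(\FF^{\prime} A, -)$ to the same triangle, and using that $\Hom^\bullet(\FF^{\prime} A, \FF^{\prime} B)$ and $\Hom^\bullet(\FF^{\prime} A, \FF^{\prime\prime} B)$ both vanish above degree $q$, yields $\Hom^i(\FF^{\prime} A, \FF B) = 0$ for $i \geq q+2$. Combining both inputs in the long exact sequence from $\Hom^\bullet(-, \FF B)$ applied to $\FF A \to \FF^{\prime} A \to \FF^{\prime\prime} A$ gives an isomorphism $\Hom^{q+2}(\FF A, \FF B) \cong \Hom^{q+3}(\FF^{\prime\prime} A, \FF B) \neq 0$.

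If $\FF$ were fully faithful, $\Hom^{q+2}(\FF A, \FF B) \cong \Ext^{q+2}(k(x), k(x)) = \Lambda^{q+2} T_x S$ would vanish whenever $q + 2 > \dim S = 2$, i.e.\ whenever $q \geq 1$; this contradicts the non-vanishing in the previous paragraph, forcing $q = 0$. The main obstacle, and the reason more than one lemma is needed, is the symmetric-power degeneracy noted above: for $q \leq n-2$ all three of $\Sym^{n-2}, \Sym^{n-1}, \Sym^n$ look identical as graded vector spaces, so the top-degree pattern used in the previous lemma (with $A, B$ skyscrapers at distinct points) matches on both sides of the critical LES and gives no contradiction. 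Taking $A = B$ breaks this symmetry at the cost of introducing a non-trivial $\Hom^\bullet(A, B)$, whose vanishing beyond $\dim S = 2$ is precisely what supplies the contradiction at degree $q + 2 \geq 3$.
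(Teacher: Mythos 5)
Your proof is correct and follows essentially the same route as the paper: the same test pair $A=B=k(x)$, the same computation that the four Hom-spaces have dimensions $0,0,1,2$ in degree $q+2$, and the same three exact-sequence chases yielding $\Hom^{q+2}(\FF A,\FF B)\neq 0$, contradicting $\Ext^{q+2}(k(x),k(x))=0$ for $q\geq 1$. The only (harmless) difference is presentational: you phrase the top-degree bookkeeping via Poincar\'e polynomials and obtain an isomorphism $\Hom^{q+2}(\FF A,\FF B)\cong\Hom^{q+3}(\FF''A,\FF B)$ where the paper only records a surjection.
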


\begin{proof}
  By the previous lemmas we have that~$p_g=0$ and~$q\leq n-2$. Assume on the contrary that~$0<q\leq n-2$ (and~$n\geq 3$). Then we have
  \begin{equation}
    \left.
    \begin{array}{c}
      \topdeg\Sym^n\HH^\bullet(S,\mathcal{O}_S) \\
      \topdeg\Sym^{n-1}\HH^\bullet(S,\mathcal{O}_S) \\
      \topdeg\Sym^{n-2}\HH^\bullet(S,\mathcal{O}_S)
    \end{array}
    \right\}=q
  \end{equation}
  such that
  \begin{equation}
    (\Sym^n\HH^\bullet(S,\mathcal{O}_S))^q\cong(\Sym^{n-1}\HH^\bullet(S,\mathcal{O}_S))^q\cong (\Sym^{n-2}\HH^\bullet(S,\mathcal{O}_S))^q\cong k.
  \end{equation}
  We set~$A=B=k(x)$ to be the skyscraper sheaf of some point~$x\in S$ which gives
  \begin{equation}
    \begin{aligned}
      \left.
      \begin{array}{c}
        \topdeg\HH^\bullet(S,A^\vee) \\
        \topdeg\Hom^\bullet(A,B)
      \end{array}
      \right\}&=2 \\
\left.
\begin{array}{c}
        \topdeg\HH^\bullet(S,A)^\vee \\
        \topdeg\HH^\bullet(S,B)
      \end{array}
      \right\}&=0
            \end{aligned}
    \label{eq:top0}
  \end{equation}
  and
  \begin{equation}
    \HH^2(S,A^\vee)\cong\Hom^2(A,B)\cong \HH^0(S,B)\cong k
  \end{equation}
  Plugging this into \eqref{eq:3} and \eqref{eq:4} gives~$\Hom^{q+2}(\FF^{\prime\prime}A,\FF^{\prime}B)\cong k$ and~$\Hom^{q+2}(\FF^{\prime\prime}A,\FF^{\prime\prime}B)\cong k^2$. Hence the short exact sequence
  \begin{equation}
    \Hom^{q+2}(\FF^{\prime\prime}A,\FF^{\prime} B)\to \Hom^{q+2}(\FF^{\prime\prime} A,\FF^{\prime\prime}B)\to \Hom^{q+3}(\FF^{\prime\prime}A,\FF B)
  \end{equation}
  gives~$\Hom^{q+3}(\FF^{\prime\prime}A,\FF B)\neq 0$. Furthermore, combining \eqref{eq:top0} with \eqref{eq:1} and \eqref{eq:2} gives
  \begin{equation}
    \topdeg\Hom^\bullet(\FF^{\prime}A,\FF^{\prime}B)= \topdeg\Hom^\bullet(\FF^{\prime}A,\FF^{\prime\prime}B)=q.
  \end{equation}
  Hence the short exact sequence
  \begin{equation}
    0=\Hom^{q+2}(\FF^{\prime}A,\FF^{\prime\prime} B)\to \Hom^{q+3}(\FF^{\prime} A,\FF B)\to \Hom^{q+3}(\FF^{\prime}A,\FF^{\prime} B)=0
  \end{equation}
  shows~$\Hom^{q+3}(\FF^{\prime} A,\FF B)=0$. Using the exact sequence
  \begin{equation}
     \Hom^{q+2}(\FF A,\FF B)\to \Hom^{q+3}(\FF^{\prime\prime} A,\FF B)\to \Hom^{q+3}(\FF^{\prime}A,\FF B)=0
  \end{equation}
  we now get~$\Hom^{q+2}(\FF A,\FF B)\neq 0$. However, for~$q>0$ we have~$\Hom^{q+2}(A,B)= 0$, so~$\FF$ is not fully faithful. This contradiction shows that~$q=0$.
\end{proof}

\begin{remark}
  For higher-dimensional varieties the Hilbert scheme becomes (very) singular (unless~$n=2,3$), and it is better to consider symmetric quotient stacks. In this case there is no universal ideal sheaf, but one can define the truncated ideal sheaf \cite[\S5]{MR3397451} and it turns out that if~$\mathcal{O}_X$ is exceptional then the associated Fourier--Mukai transform is fully faithful \cite[proposition~5.6]{MR3397451}. It would be interesting to know whether this condition is also necessary.

  When~$n=2$ it is shown in \cite[theorem~A]{MR3950704} that~$\FF=\Phi_{\mathcal{I}}\colon\derived^\bounded(X)\to\derived^\bounded(\hilbn{2}{X})$ is fully faithful if~$\mathcal{O}_X$ is exceptional, and in fact this follows from the fully faithfulness of~$\derived^\bounded(X)\to\derived^\bounded([X^2/\sym_2])$, see also \cite[remark~11]{MR3950704}. It would be interesting to know whether this condition is also necessary.

  When~$n=3$ neither sufficiency nor necessity are known, and the question is closely related to the conjecture that~$\Psi\colon\derived^\bounded([X^3/\sym_3])\to\derived^\bounded(\hilbn{3}{X})$ is fully faithful.
\end{remark}

\section{A converse to the $\mathbb{P}^n$-functor criterion of Addington}
\label{sect:Addconverse}
In this section we prove \cref{theorem:addington-converse}. Let $X$ and $Y$ be smooth projective varieties. For a Fourier--Mukai transform~$F=\Phi_{\mathcal{P}}\colon \derived^\bounded(X)\to\derived^\bounded(Y)$ we write~$F^\LL=\Phi_{\mathcal{P}^\LL}$ resp.~$F^\RR=\Phi_{\mathcal{P}^\RR}\colon \derived^\bounded(X)\to\derived^\bounded(Y)$ for its left resp.~right adjoint functor; compare \cref{section:preliminaries}. The unit~$\eta\colon \id_{\derived^\bounded{X}} \to F^\RR\circ F$ of the adjunction~$F\dashv F^\RR$ is induced by a morphism~$\eta\colon \mathcal{O}_{\Delta_X}\to \mathcal{P}^\RR\star \mathcal{P}$ where $\mathcal{P}^\RR\star \mathcal{P}$ is the convolution product of the Fourier--Mukai kernels; see \cite{MR2964634} or \cite{MR2657369}. Hence one can define
\begin{equation}
  C\coloneqq\cone(\eta\colon\identity_{\derived^\bounded(X)}\to F^\RR\circ F)
\end{equation}
as the Fourier--Mukai transform along~$\cone\bigl(\mathcal{O}_{\Delta_X}\xrightarrow{\eta} \mathcal{P}^\RR\star \mathcal{P}\bigr)$.

We will need the following two notions.
\begin{definition}
  We say that a Fourier--Mukai functor~$F\colon\derived^\bounded(X)\to\derived^\bounded(Y)$ is \emph{spherical} if the associated \emph{cotwist} $C\coloneqq\cone(\eta\colon\identity_{\derived^\bounded(X)}\to F^\RR\circ F)$ is an auto-equivalence such that there is an isomorphism~$F^\RR\cong C\circ F^\LL$.
\end{definition}

\begin{definition}
  We say that a Fourier--Mukai functor~$F\colon\derived^\bounded(X)\to\derived^\bounded(Y)$ is a~\emph{$\mathbb{P}^n$\dash functor} if there exists an autoequivalence~$H$ of~$\derived^\bounded(X)$, called the \emph{$\mathbb{P}$-cotwist} of $F$, such that
  \begin{enumerate}
    \item $F^\RR\circ F\cong\identity_{\derived^\bounded(X)}\oplus\,H\oplus\ldots\oplus H^n$;

    \item the composition~$H\circ F^\RR\circ F\xrightarrow{i\circ F^\RR\circ F} F^\RR\circ F\circ R\circ F\xrightarrow{F^\RR\circ\epsilon\circ F} R\circ F$, where $i\colon H\to F^\RR\circ F$ is the embedding of the direct summand under the above isomorphism and~$\epsilon \colon F\circ F^\RR\to \id_{\derived^\bounded(Y)}$ is the counit of adjunction, is of the form
      \begin{equation}
        \begin{pmatrix}
          * & * & \ldots & * & * \\
          \identity_H & * & & * & * \\
          0 & \identity_H & & * & * \\
          \vdots & & \ddots & * & * \\
          0 & 0 & \ldots & \identity_H & *
        \end{pmatrix}
      \end{equation}
      when written in terms of the decomposition
      \begin{equation}
        H\oplus H^2\oplus\ldots\oplus H^{n+1}
        \to
        \identity_{\derived^\bounded(X)}\oplus\,H\oplus\ldots\oplus H^n;
      \end{equation}

    \item $F^\RR \cong H^n\circ F^\LL$.
  \end{enumerate}

\end{definition}

One important reason for the interest in spherical and~$\mathbb{P}$\dash functors is that they induce autoequivalences, called \emph{twists}, of their target categories; see \cite{MR2258045,MR3692883,MR3477955}.

Note that our assumption that~$X$ and~$Y$ are smooth and projective imply that~$\derived^\bounded(X)$ and~$\derived^\bounded(Y)$ possess Serre functors~$\serre_X=(-)\otimes \omega_X[\dim X]$ and~$\serre_Y=(-)\otimes \omega_Y[\dim Y]$.
The condition~$F^\RR\cong C\circ F^\LL$ for a spherical functor is equivalent to~$\serre_Y\circ F\circ C\cong F\circ \serre_X$, see \cite[page~225]{MR3477955}, and the condition~$F^\RR \cong H^n\circ F^\LL$ for a~$\mathbb{P}^n$-functor is equivalent to~$\serre_Y\circ F\circ H^n\cong F\circ \serre_X$; see \cite[page~246]{MR3477955}.
Hence we have the following property common to spherical functors and~$\mathbb{P}$\dash functors on which our proof of \cref{theorem:addington-converse} relies.
\begin{lemma}
  \label{lem:Pspherical}
  Let~$F\colon\derived^\bounded(X)\to\derived^\bounded(Y)$ be a Fourier--Mukai transform which is a spherical or a~$\mathbb{P}$\dash functor.
  Then there exists an autoequivalence~$D\in\Aut(\derived^\bounded(X))$ such that
  \begin{equation}
    \serre_Y\circ F\circ D\cong F\circ \serre_X\,.
  \end{equation}
  Here~$D$ is
  \begin{itemize}
    \item the cotwist $C$ of~$F$, if~$F$ is a spherical functor;
    \item the~$n$th power of the~$\mathbb{P}$\dash cotwist $H$ of~$F$, if~$F$ is a~$\mathbb{P}^n$\dash functor.
  \end{itemize}
\end{lemma}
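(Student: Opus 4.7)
The plan is to take $D \coloneqq C$ in the spherical case and $D \coloneqq H^n$ in the $\mathbb{P}^n$\dash functor case, and to derive the intertwining $\serre_Y \circ F \circ D \cong F \circ \serre_X$ directly from the adjoint relation $F^\RR \cong D \circ F^\LL$ that is already built into the respective definition. The mechanism is to take left adjoints of both sides of this isomorphism.

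Concretely, the left adjoint of $F^\RR$ is $F$, and the left adjoint of $D \circ F^\LL$ is $(F^\LL)^\LL \circ D^{-1}$. The only non-formal input is the identification of the double left adjoint $(F^\LL)^\LL$. This comes from the general Serre-duality identity $G^\LL \cong \serre_A^{-1} \circ G^\RR \circ \serre_B$ for Fourier--Mukai transforms $G \colon \derived^\bounded(A) \to \derived^\bounded(B)$, which is an immediate consequence of the explicit formulas $\mathcal{P}^\LL = \mathcal{P}^\vee \otimes q^*\omega_Y[\dim Y]$ and $\mathcal{P}^\RR = \mathcal{P}^\vee \otimes p^*\omega_X[\dim X]$ recalled in \cref{section:preliminaries}. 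Applying this to $G = F^\LL \colon \derived^\bounded(Y) \to \derived^\bounded(X)$ and using $(F^\LL)^\RR \cong F$ yields $(F^\LL)^\LL \cong \serre_Y^{-1} \circ F \circ \serre_X$.

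Substituting this into the identity obtained by taking left adjoints gives $F \cong \serre_Y^{-1} \circ F \circ \serre_X \circ D^{-1}$, which after composing with $\serre_Y$ on the left and $D$ on the right rearranges to precisely $\serre_Y \circ F \circ D \cong F \circ \serre_X$, as required. The argument is entirely formal once the double left adjoint has been identified, so there is no real obstacle; this is also the content of the passages in \cite[pp.~225 and~246]{MR3477955} already invoked in the paragraph above the lemma, and the cleanest way to present the proof is essentially as a citation of those references.
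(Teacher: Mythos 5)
Your proposal is correct and matches the paper's approach: the paper also obtains the lemma directly from the defining condition $F^\RR\cong D\circ F^\LL$ (with $D=C$ resp.\ $D=H^n$), citing \cite[pp.~225 and~246]{MR3477955} for the equivalence with $\serre_Y\circ F\circ D\cong F\circ\serre_X$, which is exactly the formal adjoint/Serre-duality manipulation you spell out. Your identification $(F^\LL)^\LL\cong\serre_Y^{-1}\circ F\circ\serre_X$ and the subsequent rearrangement are accurate, so the write-up just fills in the details behind the citation the paper uses.
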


The second ingredient is the observation that for a smooth projective surface~$S$ the functor~$\FF=\Phi_{\mathcal{I}_n}\colon \derived^\bounded(S)\to \derived^\bounded(\hilbn{n}{S})$ has a left inverse~$\II\colon \derived^\bounded(\hilbn{n}{S})\to \derived^\bounded(S) $ satisfying a compatibility with the Serre functors. This left-inverse is given by the composition
\begin{equation}
  \II\colon \derived^\bounded(\hilbn{n}{S})\xrightarrow{\Psi^{-1}} \derived^\bounded_{\sym_n}(S^n)\xrightarrow{\LLL \delta^*} \derived^\bounded_{\sym_n}(S)\xrightarrow{{\Res_{\sym_2}^{\sym_n}}} \derived^\bounded_{\sym_2}(S)\xrightarrow{\MM_{\alt_2}} \derived^\bounded_{\sym_2}(S)\xrightarrow{(-)^{\sym_2}} \derived^\bounded(S)
\end{equation}
where all functors are defined in \cref{subsection:bkr-h}.

\begin{lemma}
  \label{lem:G}
  The functor
  \begin{equation}
    \II\coloneqq(-)^{\sym_2}\circ{\MM_{\alt_2}}\circ{\Res_{\sym_2}^{\sym_n}}\circ\LLL\delta^*\circ \Psi^{-1}\colon \derived^\bounded(\hilbn{n}{S})\to \derived^\bounded(S)
  \end{equation}
  satisfies~$\II\circ \FF\cong \id_{\derived^\bounded(S)}$ and~$\II\circ \serre_{\hilbn{n}{S}}\cong \serre^n_S\circ\II$.
\end{lemma}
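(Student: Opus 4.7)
The plan is to verify the two assertions separately, using explicit descriptions of each constituent functor in $\II$.

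For the Serre-functor compatibility, the argument is essentially formal. Under the BKR--Haiman equivalence $\Psi$, the Serre functor of $\derived^\bounded(\hilbn{n}{S})$ corresponds to the Serre functor of the smooth Deligne--Mumford quotient stack $[S^n/\sym_n]$, namely tensoring with the $\sym_n$-equivariant line bundle $\omega_{S^n}[2n] \cong \omega_S^{\boxtimes n}[2n]$ under its natural permutation linearisation. Its derived restriction along the small diagonal $\delta$ is $\omega_S^{\otimes n}[2n]$ on $S$ with the \emph{trivial} $\sym_n$-linearisation. The remaining three functors $\Res_{\sym_2}^{\sym_n}$, $\MM_{\alt_2}$, and $(-)^{\sym_2}$ all commute with tensoring by a trivially-linearised line bundle and with cohomological shifts, so $\II$ intertwines $\serre_{\hilbn{n}{S}}$ with $(-)\otimes\omega_S^{\otimes n}[2n] = \serre_S^n$.

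For the left-inverse assertion, the plan is to apply $\II$ separately to each vertex of the triangle \eqref{eq:triangle} and reassemble. Since $\Psi^{-1}(\mathcal{O}_{\hilbn{n}{S}}) \cong \mathcal{O}_{S^n}$ with trivial linearisation, one has $\Psi^{-1}(\FF^{\prime} A) \cong \HH^\bullet(S,A)\otimes\mathcal{O}_{S^n}$; after $\LLL\delta^*$ this is $\HH^\bullet(S,A)\otimes\mathcal{O}_S$ with trivial $\sym_2$-action, so its sign-isotypic component vanishes and $\II\FF^{\prime} A \cong 0$. For the tautological term $\FF^{\prime\prime} A$, one uses Scala's resolution, which realises $\Psi^{-1}(A^{[n]})$ as an explicit $\sym_n$-equivariant complex on $S^n$ whose leading term is $\bigoplus_{i=1}^n p_i^*A$ permuted by $\sym_n$ and whose higher terms are supported on partial diagonals. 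Since $\delta^*p_i^* = \id_S$, the leading term pulls back to $\bigoplus_{i=1}^n A$; restricted to $\sym_2 = \langle(12)\rangle$ this decomposes as the regular representation on the first two summands plus $(n-2)$ trivial copies of $A$, whose sign-isotypic part is a single copy of $A$. Combined with the triangle $\II\FF A \to \II\FF^{\prime}A \to \II\FF^{\prime\prime}A$, and tracking cohomological shifts carefully, one obtains $\II\FF A \cong A$.

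The main obstacle is the bookkeeping for the higher terms in Scala's resolution: under $\LLL\delta^*$, each partial-diagonal stratum contributes a derived restriction with Koszul-type Tor terms, and one must verify that all of these die in the sign-twisted $\sym_2$-invariants so that the surviving copy of $A$ sits in the correct cohomological degree. The mechanism is that the transposition $(12)\in\sym_2$ either stabilises a partial-diagonal index subset $I\subset\{1,\ldots,n\}$ disjoint from $\{1,2\}$---in which case that summand is $\sym_2$-trivial and is killed by $\MM_{\alt_2}\circ(-)^{\sym_2}$---or acts freely on a matched pair, allowing an explicit cancellation. A cleaner but less elementary alternative is to describe the whole composition $\II\circ\FF$ as a Fourier--Mukai transform on $S$ and check directly that its kernel is $\mathcal{O}_{\Delta_S}$, reducing the claim to a single base-change computation.
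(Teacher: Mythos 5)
Your treatment of the Serre-functor compatibility is essentially the paper's own argument: pass through $\Psi^{-1}$ using that equivalences commute with Serre functors, observe that $\delta^*\omega_{S^n}\cong\omega_S^{\otimes n}$ carries the trivial linearisation, and note that $\Res^{\sym_n}_{\sym_2}$, $\MM_{\alt_2}$ and $(-)^{\sym_2}$ commute with tensoring by a trivially linearised line bundle and with shifts. That half is fine.

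The left-inverse assertion is where the proposal has a genuine gap. The paper does not reprove this; it cites \cite[theorem~3.6]{1808.05931v2}, and the content of that theorem is precisely the step you flag but do not carry out. Three concrete problems with the sketch. First, Scala's complex computes the image of $A^{[n]}$ under the McKay functor $\RRR p_*\circ\LLL q^*$, which is \emph{not} the same as $\Psi^{-1}$ as defined here (they differ by a twist/sign-linearisation discrepancy, which is one of the main points of the cited reference); you cannot substitute one for the other without accounting for this. Second, the vanishing of the higher terms of the complex after $\MM_{\alt_2}\circ(-)^{\sym_2}$ is not a matter of bookkeeping that can be waved through: $\LLL\delta^*$ applied to a sheaf supported on a partial diagonal $\Delta_I$ produces Koszul-type Tor terms on which $\sym_2$ acts through the conormal directions, and these representations do contain the sign character, so the dichotomy you describe (either $\sym_2$-trivial, hence killed, or freely permuted, hence cancelling) does not cover what actually happens; some of these contributions must in fact survive. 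Third, and relatedly, your degree count is inconsistent: since $\II\FF^{\prime}\cong 0$, the triangle forces $\II\FF A\cong\II\FF^{\prime\prime}A[-1]$, so you need $\II\FF^{\prime\prime}A\cong A[1]$, whereas the leading term $\bigoplus_i p_i^*A$ of the complex contributes a copy of $A$ in degree $0$. The surviving copy of $A$ in degree $1$ has to come from the higher, diagonal-supported terms --- the very terms your mechanism claims to kill. So the sketch as written would produce $A[-1]$ (or $0$), not $A$. Either carry out the full computation of \cite[theorem~3.6]{1808.05931v2}, or simply cite it as the paper does.
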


\begin{proof}
  The statement~$\II\circ \FF\cong \id$ is proved in \cite[theorem~3.6]{1808.05931v2}.

  For the compatibility with the Serre functors, first note that $\Psi^{-1}$, being an equivalence, commutes with the Serre functors, i.e.
  \begin{equation}
    \Psi^{-1}\circ\serre_{\hilbn{n}{S}}\cong\serre_{[\Sym^nS]}\circ\Psi^{-1}
  \end{equation}
  where~$\serre_{[\Sym^nS]}$ denotes the Serre functor of~$\derived^\bounded_{\sym_n}(S^n)$. It is given by~$\serre_{[\Sym^nS]}=-\otimes \omega_{S^n}[2n]$ where~$\omega_{S^n}$ is equipped with the natural~$\sym_n$\dash linearisation. This linearisation restricts to the trivial action on the pullback~$\delta^*\omega_{S^n}\cong \omega_S^{\otimes n}$. Hence for~$\derived^\bounded_{\sym_n}(S^n)$ we have natural isomorphisms
  \begin{equation}
    \begin{aligned}
      \bigl(\alt_2\otimes \Res_{\sym_2}^{\sym_n}\delta^*(E\otimes \omega_{S^n})\bigr)^{\sym_2}&\cong \bigl(\alt_2\otimes \Res_{\sym_2}^{\sym_n}\delta^*(E)\otimes \omega_{S}^{\otimes n}\bigr)^{\sym_2} \\
      &\cong \bigl(\alt_2\otimes \Res_{\sym_2}^{\sym_n}\delta^*(E)\bigr)^{\sym_2}\otimes \omega_{S}^{\otimes n} \,.
    \end{aligned}
  \end{equation}
  This gives
  \begin{equation}
    (-)^{\sym_2}\circ{\MM_{\alt_2}}\circ{\Res_{\sym_2}^{\sym_n}}\circ\delta^*\circ\serre_{[\Sym^nS]}\cong\serre_{S}^n\circ (-)^{\sym_2}\circ{\MM_{\alt_2}}\circ{\Res_{\sym_2}^{\sym_n}}\circ \delta^*\,.
  \end{equation}
\end{proof}

\begin{corollary}
  \label{cor:CS}
  Assume that~${\serre_{\hilbn{n}{S}}}\circ \FF\circ D\cong \FF\circ\serre_S$ for some~$D\in \Aut(\derived^\bounded(S))$. Then we have~$D\cong\serre_S^{-(n-1)}$.
\end{corollary}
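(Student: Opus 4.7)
The plan is to apply the left inverse $\II$ from \cref{lem:G} to both sides of the hypothesised isomorphism, and then use the two properties of $\II$ established in that lemma to reduce everything to an identity relating powers of $\serre_S$. Since $\II$ is a well-defined functor on the derived categories, post-composing both sides of a natural isomorphism with it preserves the isomorphism.

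First I would start from the assumption
\begin{equation}
  \serre_{\hilbn{n}{S}} \circ \FF \circ D \cong \FF \circ \serre_S
\end{equation}
and apply $\II$ on the left to obtain
\begin{equation}
  \II \circ \serre_{\hilbn{n}{S}} \circ \FF \circ D \cong \II \circ \FF \circ \serre_S.
\end{equation}
Next I would use the Serre-functor compatibility $\II \circ \serre_{\hilbn{n}{S}} \cong \serre_S^n \circ \II$ from \cref{lem:G} on the left-hand side, and the left-inverse property $\II \circ \FF \cong \id_{\derived^\bounded(S)}$ on both sides. The left-hand side becomes $\serre_S^n \circ \II \circ \FF \circ D \cong \serre_S^n \circ D$, while the right-hand side becomes $\serre_S$. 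This yields
\begin{equation}
  \serre_S^n \circ D \cong \serre_S,
\end{equation}
and then, since $\serre_S$ is an autoequivalence, I would compose with $\serre_S^{-n}$ on the left to conclude $D \cong \serre_S^{-(n-1)}$, as claimed.

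There is essentially no obstacle here: all the work has been done in \cref{lem:G}, and the argument is a direct two-line manipulation. The only point of care is to make sure the isomorphisms involved are genuine natural isomorphisms of exact functors (so that the cancellation makes sense on the level of autoequivalences), but this is automatic since $\FF$, $\II$, $\serre_S$, $\serre_{\hilbn{n}{S}}$ and $D$ are all exact and the relevant identities in \cref{lem:G} are natural.
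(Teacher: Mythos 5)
Your proof is correct and is essentially identical to the paper's own argument: both apply the left inverse $\II$ from \cref{lem:G}, use the relations $\II\circ\serre_{\hilbn{n}{S}}\cong\serre_S^n\circ\II$ and $\II\circ\FF\cong\id$ to reduce the hypothesis to $\serre_S^n\circ D\cong\serre_S$, and then cancel $\serre_S^n$. No differences worth noting.
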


\begin{proof}
  We postcompose both sides of the given isomorphism of functors by the left inverse~$\II$. By \cref{lem:G}, we get on the left-hand side
  \begin{equation}
    \II\circ\serre_{\hilbn{n}{S}}\circ \FF\circ D\cong\serre_S^n\circ D\,,
  \end{equation}
  while the right-hand side is $\II\circ \FF\circ \serre_S\cong \serre_S$. Now, postcomposing both sides with $\serre_S^{-n}$ gives the assertion~$D\cong\serre_S^{-(n-1)}$.
\end{proof}

For $x\in S$, we consider the subset $U_x\coloneqq\{\xi\in \hilbn{n}{S}\mid x\notin \xi\}\subseteq \hilbn{n}{S}$. It is an open subset whose complement is of codimension~$2$.

\begin{lemma}
  \label{lem:U}
  For~$x\in S$, we have~$\FF(k(x))|_{U_x}\cong \mathcal{O}_{U_x}$.
\end{lemma}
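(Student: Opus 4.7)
The plan is to compute $\FF(k(x))$ directly via the projection formula, identify it with a derived pullback of $\mathcal{I}_{Z_n}$, and then exploit the fact that over $U_x$ the corresponding slice of $S\times\hilbn{n}{S}$ misses the universal subscheme entirely.

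First I would note that $p\colon S\times\hilbn{n}{S}\to S$ is flat, so $p^*k(x)\cong(\iota_x)_*\mathcal{O}_{\hilbn{n}{S}}$, where $\iota_x\colon\{x\}\times\hilbn{n}{S}\hookrightarrow S\times\hilbn{n}{S}$ is the inclusion; note that $q\circ\iota_x$ is the identity on $\hilbn{n}{S}$ under the obvious identification. Applying the projection formula for the closed immersion $\iota_x$ to
\begin{equation*}
\FF(k(x))=\RRR q_*\bigl(p^*k(x)\otimes^\LLL\mathcal{I}_{Z_n}\bigr),
\end{equation*}
one obtains $\FF(k(x))\cong\LLL\iota_x^*\mathcal{I}_{Z_n}$ in $\derived^\bounded(\hilbn{n}{S})$.

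Next I would restrict to $U_x$. Since a point $(x,\xi)\in S\times\hilbn{n}{S}$ lies in the universal subscheme $Z_n$ if and only if $x\in\xi$, the slice $\{x\}\times U_x$ is disjoint from $Z_n$. Hence on the open neighbourhood $V\coloneqq (S\times U_x)\setminus Z_n$ of $\{x\}\times U_x$ the defining short exact sequence of $\mathcal{I}_{Z_n}$ collapses to an isomorphism $\mathcal{I}_{Z_n}|_V\cong\mathcal{O}_V$. Restricting the identification above to $U_x$ then yields $\LLL\iota_x^*\mathcal{I}_{Z_n}|_{U_x}\cong\mathcal{O}_{U_x}$, as desired.

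There is no real obstacle here. The only mildly subtle point is that $\mathcal{I}_{Z_n}$ need not be locally free on $S\times\hilbn{n}{S}$, so in principle one must use the derived pullback; however the trivialisation of $\mathcal{I}_{Z_n}$ in a neighbourhood of $\{x\}\times U_x$ makes all higher Tor contributions vanish automatically and reduces the computation to an honest pullback of the structure sheaf.
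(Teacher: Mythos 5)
Your proof is correct, but it takes a different route from the paper's. The paper restricts the exact triangle of functors $\FF\to\FF'\to\FF''$ from \eqref{eq:triangle} to the point $k(x)$: since $\HH^\bullet(S,k(x))\cong k$ one gets $\FF'(k(x))\cong\mathcal{O}_{\hilbn{n}{S}}$, and since $\mathcal{O}_{Z_n}\otimes p^*k(x)$ is supported over the complement of $U_x$ one gets $\FF''(k(x))|_{U_x}\cong 0$, whence the claim. You instead work at the level of the kernel: the identification $\FF(k(x))\cong\LLL\iota_x^*\mathcal{I}_{Z_n}$ via flatness of $p$ and the projection formula is the standard fact that the Fourier--Mukai transform of a skyscraper is the derived restriction of the kernel to the corresponding slice, and the trivialisation $\mathcal{I}_{Z_n}|_V\cong\mathcal{O}_V$ on the open set $V=(S\times U_x)\setminus Z_n$ containing $\{x\}\times U_x$ immediately gives $\LLL\iota_x^*\mathcal{I}_{Z_n}|_{U_x}\cong\mathcal{O}_{U_x}$, with no higher Tor terms to worry about. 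Both arguments hinge on the same geometric input, namely that $Z_n$ is disjoint from $\{x\}\times U_x$; the paper's version avoids any discussion of derived pullbacks by staying inside the triangle of functors already set up in the preliminaries, while yours gives slightly more, namely an explicit description of $\FF(k(x))$ on all of $\hilbn{n}{S}$ as a derived restriction of the universal ideal sheaf.
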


\begin{proof}
  We use the exact triangle of functors \eqref{eq:triangle}. By definition of~$\FF^{\prime\prime}$ as the Fourier--Mukai transform along the structure sheaf of the universal family, we have~$\FF^{\prime\prime}(k(x))|_{U_x}\cong 0$. Furthermore, as~$\HH^{\bullet}(S,k(x))\cong k[0]$, we have~$\FF^{\prime}(k(x))\cong \mathcal{O}_{\hilbn{n}{S}}$. The assertion follows from the exact triangle~$\FF\to \FF^{\prime}\to \FF^{\prime\prime}\to \FF[1]$.
\end{proof}

\begin{proposition}
  \label{prop:omegaO}
  Let~$\serre_{\hilbn{n}{S}}\circ \FF\circ D\cong \FF\circ \serre_S$ for some~$D\in \Aut(\derived^\bounded(S))$. Then~$\omega_S\cong \mathcal{O}_S$.
\end{proposition}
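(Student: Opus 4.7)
The plan is to combine Corollary \ref{cor:CS} with Lemma \ref{lem:U} and the known formula for the canonical bundle of the Hilbert scheme. First, Corollary \ref{cor:CS} pins down the autoequivalence $D$ to be $\serre_S^{-(n-1)}$, so that the hypothesis rearranges to the simpler statement
\begin{equation}
  \serre_{\hilbn{n}{S}}\circ\FF\cong\FF\circ\serre_S^n .
\end{equation}
Unwinding the Serre functors as twist-and-shift by canonical bundles, and cancelling the common shift $[2n]$, this is the statement $\FF(E)\otimes\omega_{\hilbn{n}{S}}\cong\FF(E\otimes\omega_S^n)$, natural in $E\in\derived^\bounded(S)$.

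Next I would feed a skyscraper into this. For any closed point $x\in S$ we have $k(x)\otimes\omega_S^n\cong k(x)$ since tensoring a skyscraper with a line bundle is canonically trivial. Hence
\begin{equation}
  \FF(k(x))\otimes\omega_{\hilbn{n}{S}}\cong\FF(k(x)) .
\end{equation}
Restricting to the open subset $U_x\subseteq\hilbn{n}{S}$ and invoking Lemma \ref{lem:U}, which says $\FF(k(x))|_{U_x}\cong\mathcal{O}_{U_x}$, yields $\omega_{\hilbn{n}{S}}|_{U_x}\cong\mathcal{O}_{U_x}$.

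The complement $\hilbn{n}{S}\setminus U_x$ consists of subschemes containing $x$, which has dimension $2(n-1)$, hence codimension $2$ in the smooth variety $\hilbn{n}{S}$. Consequently the restriction $\Pic(\hilbn{n}{S})\to\Pic(U_x)$ is injective, so triviality of $\omega_{\hilbn{n}{S}}$ on $U_x$ extends to global triviality $\omega_{\hilbn{n}{S}}\cong\mathcal{O}_{\hilbn{n}{S}}$. Finally I would invoke Fogarty's identification $\omega_{\hilbn{n}{S}}\cong\mathcal{D}_{\omega_S}$, combined with $\mathcal{O}_{\hilbn{n}{S}}\cong\mathcal{D}_{\mathcal{O}_S}$ and the injectivity of the homomorphism $L\mapsto\mathcal{D}_L$ recalled in \cref{subsection:hilbert}, to conclude $\omega_S\cong\mathcal{O}_S$.

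The main obstacle is really only a bookkeeping one, namely correctly packaging the isomorphism of functors as the tensor identity $\FF(E)\otimes\omega_{\hilbn{n}{S}}\cong\FF(E\otimes\omega_S^n)$; once skyscrapers are plugged in, everything reduces to the formal Picard-group statements about $\hilbn{n}{S}$ that are already in place in the preliminaries.
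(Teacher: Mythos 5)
Your proposal is correct and follows essentially the same route as the paper: apply \cref{cor:CS} to reduce to $\FF(k(x))\otimes\omega_{\hilbn{n}{S}}\cong\FF(k(x))$, invoke \cref{lem:U} to trivialise $\omega_{\hilbn{n}{S}}$ on $U_x$, extend across the codimension-$2$ complement, and descend to $S$ via the injectivity of $L\mapsto\mathcal{D}_L$ together with $\mathcal{D}_{\omega_S}\cong\omega_{\hilbn{n}{S}}$. The only cosmetic difference is that you cancel the shifts at the level of the functorial identity before plugging in the skyscraper, whereas the paper plugs in $k(x)$ first; the content is identical.
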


\begin{proof}
 We apply both sides of the given isomorphism to the skyscraper sheaf~$k(x)$ of some point $x\in S$. By \cref{cor:CS}, we have
 \begin{equation}
   D(k(x))\cong \serre_S^{-(n-1)}(k(x))\cong k(x)[-2(n-1)].
 \end{equation}
 Hence the given isomorphism yields an isomorphism~$\FF(k(x))\otimes \omega_{\hilbn{n}{S}}\cong \FF(k(x))$. Now, \cref{lem:U} gives~$\omega_{\hilbn{n}{S}}|_{U_x}\cong \mathcal{O}_{U_x}$. Since~$\hilbn{n}{S}$ is normal and the complement of~$U_x$ is of codimension~$2$, this implies $\mathcal{O}_{\hilbn{n}{S}}\cong\omega_{\hilbn{n}{S}}$.
Recall from \cref{subsection:hilbert} that there is an injective group homomorphism~$\Pic(S)\hookrightarrow \Pic(\hilbn{n}{S})$, $L\mapsto \mathcal{D}_L$ satisfying~$\mathcal{D}_{\mathcal{O}_S}\cong \mathcal{O}_{\hilbn{n}{S}}$ and~$\mathcal{D}_{\omega_S}\cong\omega_{\hilbn{n}{S}}$; for the latter isomorphism, see e.g.\ \cite[proposition~1.6]{MR2110899}. Hence~$\mathcal{O}_{\hilbn{n}{S}}\cong\omega_{\hilbn{n}{S}}$ implies $\mathcal{O}_S\cong \omega_S$.
\end{proof}

\begin{proof}[Proof of \cref{theorem:addington-converse}]
  Assume that~$\FF\colon \derived^\bounded(S)\to \derived^\bounded(\hilbn{n}{S})$ is a spherical functor or a~$\mathbb{P}^m$-functor for some $m\geq 1$. We want to show that~$S$ is a K3 surface (in which case it is known by \cite[theorem~3.1]{MR3477955} that~$\FF$ is a~$\mathbb{P}^{n-1}$\dash functor). By \cref{lem:Pspherical} together with \cref{prop:omegaO}, we see that~$\omega_S$ is trivial. Hence~$S$ is a K3 or an abelian surface. But for an abelian surface, we have~$\FF^\RR\circ\FF=\id_{\derived^\bounded(S)}\oplus[-1]^{\oplus 2}\oplus \dots \oplus [-(2n-1)]^{\oplus 2}\oplus [-2n]$ as computed in \cite[page~1199]{MR3391883}. Hence if~$S$ is an abelian surface, $\FF$ is neither a spherical nor a~$\mathbb{P}$\dash functor.
\end{proof}

\begin{remark}
  \label{remark:Pn-functors-abound}
  When~$S$ is a K3 surface the functor~$\FF$ was the first instance of a~$\mathbb{P}$\dash functor \cite[theorem~3.1]{MR3477955} from~$\derived^\bounded(S)$ to~$\derived^\bounded(\hilbn{n}{S})$. We now know that this is the only surface for which~$\FF$ gives such a functor. But there exist other constructions of~$\mathbb{P}$\dash functors $\derived^\bounded(S)$ to~$\derived^\bounded(\hilbn{n}{S})$ which work for arbitrary surfaces; see \cite{MR3455879, krug-nakajima}.
\end{remark}

\section{Embedding multiple copies}
\label{section:multiple-copies}
In this section we prove \cref{corollary:sod-surfaces}. The motivation for these results comes from a result for moduli of vector bundles on curves. In this context, the analogue of \cref{theorem:krug-sosna-converse} is the fully faithfulness of the Fourier--Mukai functor
\begin{equation}
  \Phi_{\mathcal{E}}\colon\derived^\bounded(C)\to\derived^\bounded(\moduli_C(r,\mathcal{L})),
\end{equation}
where~$\moduli_C(r,\mathcal{L})$ is the moduli space of rank~$r$ bundles with determinant~$\mathcal{L}$ and~$\mathcal{E}$ the universal vector bundle on~$C\times\moduli_C(r,\mathcal{L})$. Here~$\gcd(r,\deg\mathcal{L})=1$ so that the moduli space is smooth and projective of dimension~$(r^2-1)(g-1)$. This is shown for~$r=2$ in \cite{MR3764066,MR3713871} and~$r\geq 2$ in \cite{MR3954042} (under suitable conditions on the genus and with~$\deg\mathcal{L}=1$) and more generally in \cite{belmans-mukhopadhyay-work-in-progress}.

Now in \cite{MR3954042} it was observed that~$\Phi_{\mathcal{E}}$ can be twisted by~$\mathcal{O}_{\moduli_C(r,\mathcal{L})}(1)$, so that~$\Phi_\mathcal{E}(\derived^\bounded(C))$ and~$\Phi_{\mathcal{E}}(\derived^\bounded(C))\otimes\mathcal{O}_{\moduli_C(r,\mathcal{L})}(1)$ are semiorthogonal. This follows from~$\moduli_C(r,\mathcal{L})$ being a smooth projective Fano variety \emph{of index~2}.

Hilbert schemes of points on surfaces are never Fano\footnote{As the Hilbert--Chow morphism is a crepant resolution, the anticanonical bundle is never positive on the exceptional divisor.}, but nevertheless a similar method of embedding multiple copies exists, as will be shown in this section.

\begin{proposition}
  \label{prop:sorth}
 Let $S$ be a smooth projective surface, and let $L, M\in \Pic(S)$ with $\Hom^\bullet(L,M)=0$. Then, for $n\ge 3$, the subcategories $(\im\FF)\otimes L\subset \derived^\bounded(\hilbn{n}{S})$ and $(\im\FF)\otimes M \subset \derived^\bounded(\hilbn{n}{S})$ are semiorthogonal: for every $A,B\in \derived^\bounded(S)$, we have
  \begin{equation}
    \Hom^\bullet(\FF A\otimes \mathcal{D}_L, \FF B\otimes \mathcal{D}_M)=0\,.
  \end{equation}
\end{proposition}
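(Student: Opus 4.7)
The plan is to reduce the vanishing of $\Hom^\bullet(\FF A\otimes \mathcal{D}_L,\FF B\otimes \mathcal{D}_M)$ to the vanishing of the four analogous groups involving $\FF'$ and $\FF''$, for which the explicit formulae \eqref{eq:D1}--\eqref{eq:D4} are available. Since $\Hom^\bullet(L,M)=0$, every symmetric power $\Sym^k \Hom^\bullet(L,M)$ vanishes for $k\geq 1$. The hypothesis $n\geq 3$ is exactly what is needed to ensure that all three symmetric powers appearing in \eqref{eq:D1}--\eqref{eq:D4}, namely $\Sym^n$, $\Sym^{n-1}$ and $\Sym^{n-2}$ of $\Hom^\bullet(L,M)$, have positive exponent and therefore vanish. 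The key point is that this exponent 2 condition kicks in precisely through the second summand of \eqref{eq:D4}: for $n=2$ one would find $\Sym^0\Hom^\bullet(L,M)=k$, so that summand need not be zero.

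Concretely, I would first invoke \eqref{eq:D1}--\eqref{eq:D4} to conclude
\begin{equation}
  \Hom^\bullet(\FF^{\varepsilon_1} A\otimes \mathcal{D}_L,\FF^{\varepsilon_2} B\otimes \mathcal{D}_M)=0
\end{equation}
for every choice $\varepsilon_1,\varepsilon_2\in\{\prime,\prime\prime\}$, using only the vanishing of $\Sym^k\Hom^\bullet(L,M)$ for $k\geq 1$ and the assumption $n\geq 3$.

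Next I would run the argument with the distinguished triangle \eqref{eq:triangle} twice. Tensoring \eqref{eq:triangle} applied to $B$ with $\mathcal{D}_M$ gives, after applying $\Hom^\bullet(\FF^{\varepsilon} A\otimes \mathcal{D}_L,-)$ for $\varepsilon\in\{\prime,\prime\prime\}$, a long exact sequence whose outer terms are zero by the previous step; hence
\begin{equation}
  \Hom^\bullet(\FF^{\varepsilon}A\otimes \mathcal{D}_L,\FF B\otimes \mathcal{D}_M)=0\quad \text{for }\varepsilon\in\{\prime,\prime\prime\}.
\end{equation}
Then tensoring \eqref{eq:triangle} applied to $A$ with $\mathcal{D}_L$ and applying $\Hom^\bullet(-,\FF B\otimes \mathcal{D}_M)$ gives a long exact sequence whose outer terms are again zero, which yields $\Hom^\bullet(\FF A\otimes \mathcal{D}_L,\FF B\otimes \mathcal{D}_M)=0$ as required.

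There is essentially no obstacle beyond bookkeeping: once the symmetric powers vanish for the combinatorial reason above, a double application of the long exact sequence coming from the triangle \eqref{eq:triangle} propagates the vanishing from the four building blocks to $\FF$ itself. The only subtlety worth flagging is the sharpness of the bound $n\geq 3$, which I would briefly note is forced by the second summand of \eqref{eq:D4}.
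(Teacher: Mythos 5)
Your proposal is correct and is essentially identical to the paper's proof: both reduce to the vanishing of \eqref{eq:D1}--\eqref{eq:D4} via the vanishing of $\Sym^{n}$, $\Sym^{n-1}$, $\Sym^{n-2}$ of $\Hom^\bullet(L,M)$ for $n\geq 3$, and then propagate this through the exact triangle \eqref{eq:triangle} applied in each argument. Your remark that the bound $n\geq 3$ is forced precisely by the $\Sym^{n-2}$ factor in the second summand of \eqref{eq:D4} is a correct and worthwhile observation, though the paper does not spell it out.
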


\begin{proof}
  As $n\ge 3$, the vanishing $\Hom^\bullet(L,M)=0$ implies
  \begin{equation}
    \Sym^{n}\Hom^\bullet(L,M)= \Sym^{n-1}\Hom^\bullet(L,M)=\Sym^{n-2}\Hom^\bullet(L,M)=0\,.
  \end{equation}
  This means that all the Hom-spaces \eqref{eq:D1}--\eqref{eq:D4} vanish. Then, by the excact triangle \eqref{eq:triangle}, also $\Hom^\bullet(\FF A\otimes \mathcal{D}_L, \FF B\otimes \mathcal{D}_M)=0$.
\end{proof}

\begin{proof}[Proof of \cref{corollary:sod-surfaces}]
  Let $\mathcal{O}_S$ be exceptional. By \cref{theorem:krug-sosna-converse}, the functor $\FF\colon \derived^\bounded(S)\to \derived^\bounded (\hilbn{n}{S})$ is fully faithful. Let now $L_1,\dots, L_m$ be an exceptional collection of line bundles. Since tensor product by the associated line bundle $\mathcal{D}_{L_i}$ is an autoequivalence of $\derived^\bounded (\hilbn{n}{S})$, the functor $\FF(-)\otimes \mathcal{D}_{L_i}\colon \derived^\bounded(S)\to \derived^\bounded (\hilbn{n}{S})$ is again fully faithful for every $i=1,\dots,n$.
This means that the subcategories $\FF( \derived^\bounded(S))\otimes \mathcal{D}_{L_i}$ of $\derived^\bounded (\hilbn{n}{S})$ are admissible. The semiorthogonality of these subcategories is provided by \cref{prop:sorth}.
\end{proof}

\begin{remark}
  Even when~$\mathcal{A}$ is trivial, the category~$\mathcal{B}$ in \eqref{equation:sod-multiple} is not. To see this it suffices to observe that the number of copies of~$\derived^\bounded(S)$ in \eqref{equation:sod-multiple} does not grow with~$n$.
\end{remark}

\section{Semiorthogonal decompositions for symmetric products of curves}
\label{section:sod-toda}
In this section we prove \cref{theorem:toda}. The Hilbert scheme of points on a smooth projective curve is nothing but its symmetric power: $\hilbn nC\cong C^{(n)}$. And~$C^{(n)}$ has a description in terms of a projectivisation of a coherent but not necessary locally free sheaf. This is the content of \cite[theorem~4]{MR0151459}, which describes the Abel--Jacobi map
\begin{equation}
  C^{(n)}\to J\coloneqq\Jac C
\end{equation}
as a projectivisation of a coherent sheaf on $J$.

For~$i\geq 2g-1$ the Abel--Jacobi map has the structure of a projective bundle for a locally free sheaf, so one can just apply Orlov's projective bundle formula \cite[theorem~2.6]{MR1208153} to describe~$\derived^\bounded(C^{(n)})$.

For~$n\leq g-1$ it is expected that~$\derived^\bounded(C^{(n)})$ is indecomposable. This is proven for~$n=1$ in \cite{MR2838062} and for~$n\leq\lfloor\frac{g+3}{2}\rfloor$ in \cite{1807.10702v1,belmans-okawa-ricolfi}.

In the interesting range~$n=g,\ldots,2g-2$, where the Abel--Jacobi map is surjective but not a bundle, a semiorthogonal decomposition for~$\derived^\bounded(C^{(n)})$ is known, and forms the content of \cref{theorem:toda}. In \cite{1805.00183v2}, this is shown using wall-crossing methods. We will give a more elementary proof using the description of~$C^{(n)}$ as a projectivisation of a coherent sheaf on the Jacobian~$J=\Jac C$, together with the description of the derived category of such projectivisations \cite[theorem~3.4]{1811.12525v2}. Let us recall the statement of loc.\ cit.

For a coherent sheaf~$\mathcal{G}$ of rank~$r$ on~$X$ we will denote
\begin{equation}
  X^{>i}(\mathcal{G})\coloneqq\{x\in X\mid\rk\mathcal{G}(x)>i\}
\end{equation}
such that the \emph{singular locus}~$\Sing(\mathcal{G})$ of~$\mathcal{G}$ is~$X^{>r}(\mathcal{G})$, and the \emph{smooth part~$\Sing(\mathcal{G})^{\mathrm{sm}}$ of the singular locus} is~$X^{>r}(\mathcal{G})\setminus X^{>r+1}(\mathcal{G})$.

\begin{theorem}[Jiang--Leung's generalised projective bundle formula]
  \label{theorem:jiang-leung-bundle}
  Let~$X$ be a smooth projective variety. Let~$\mathcal{G}$ be a coherent sheaf of rank~$r$, which locally admits a 2-step locally free resolution. Assume that
  \begin{enumerate}
    \item $\mathbb{P}(\mathcal{G})$ is irreducible of expected dimension~$\dim X+r-1$;
    \item $\mathbb{P}(\sExt^1(\mathcal{G},\mathcal{O}_X))$ is irreducible of expected dimension~$\dim X-r-1$;
    \item the smooth part of the singular locus~$\Sing(\mathcal{G})^{\mathrm{sm}}$ is non-empty of expected codimension~$r+1$ in~$X$.
  \end{enumerate}
  Then
  \begin{enumerate}
    \item $\mathbb{P}(\sExt^1(\mathcal{G},\mathcal{O}_X))\to\Sing(\mathcal{G})$ is a resolution of singularities,
    \item there exists a semiorthogonal decomposition
      \begin{equation}
        \label{equation:generalised-bundle-sod}
        \derived^\bounded(\mathbb{P}(\mathcal{G}))
        =
        \big\langle
          \derived^\bounded(\mathbb{P}(\sExt^1(\mathcal{G},\mathcal{O}_X))),
          \underbrace{
            \derived^\bounded(X),
            \ldots
            \derived^\bounded(X)
          }_{r}
        \big\rangle.
      \end{equation}
  \end{enumerate}
\end{theorem}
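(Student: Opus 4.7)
The plan is to mimic the classical proof of Orlov's projective bundle formula while compensating for the singularities of~$\mathcal{G}$ via its~2\dash step locally free resolution, in the spirit of the Cayley trick. First I would fix, locally on~$X$, a resolution~$\mathcal{V}_1\xrightarrow{\varphi}\mathcal{V}_0\to\mathcal{G}\to 0$ by locally free sheaves. The transpose of~$\varphi$ realises~$\mathbb{P}(\mathcal{G})$ as the zero locus of a section of~$\mathcal{V}_1^\vee\otimes\mathcal{O}(1)$ on the honest projective bundle~$\pi\colon\mathbb{P}(\mathcal{V}_0)\to X$; dually, the same map realises~$\mathbb{P}(\sExt^1(\mathcal{G},\mathcal{O}_X))$ as the zero locus of a section of~$\mathcal{V}_0\otimes\mathcal{O}(1)$ on~$\mathbb{P}(\mathcal{V}_1^\vee)\to X$. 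The expected\dash dimension hypotheses~(i) and~(ii) guarantee that these sections cut out subschemes of the right codimensions, so that their Koszul complexes are exact and the relevant higher $\sExt$ sheaves vanish away from~$\Sing(\mathcal{G})$.

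Next I would introduce the incidence (Cayley trick) variety
\begin{equation}
  W \subset \mathbb{P}(\mathcal{V}_0)\times_X \mathbb{P}(\mathcal{V}_1^\vee)
\end{equation}
cut out by the pairing~$\mathcal{O}_{\mathbb{P}(\mathcal{V}_0)}(-1)\boxtimes\mathcal{O}_{\mathbb{P}(\mathcal{V}_1^\vee)}(-1)\to\mathcal{O}$ determined by~$\varphi$. The expected picture is that the first projection~$W\to\mathbb{P}(\mathcal{V}_0)$ factors through~$\mathbb{P}(\mathcal{G})$ and exhibits~$W$ as the blowup of~$\mathbb{P}(\mathcal{G})$ along~$\mathbb{P}(\sExt^1(\mathcal{G},\mathcal{O}_X))$, while the second projection exhibits~$W$ as a projective bundle over~$\mathbb{P}(\sExt^1(\mathcal{G},\mathcal{O}_X))$. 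Assumption~(iii) is precisely what ensures that the contraction~$\mathbb{P}(\sExt^1(\mathcal{G},\mathcal{O}_X))\to\Sing(\mathcal{G})$ is a resolution of singularities, yielding part~(1) of the conclusion.

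With this roof diagram in hand, I would derive the semiorthogonal decomposition~\eqref{equation:generalised-bundle-sod} by combining two classical tools. Orlov's projective bundle formula applied to the second projection decomposes~$\derived^\bounded(W)$ into copies of~$\derived^\bounded(\mathbb{P}(\sExt^1(\mathcal{G},\mathcal{O}_X)))$, while Orlov's blowup formula applied to the first projection identifies~$\derived^\bounded(W)$ with the gluing of~$\derived^\bounded(\mathbb{P}(\mathcal{G}))$ and further copies of the derived category of the exceptional divisor. Matching and mutating the two descriptions isolates one copy of~$\derived^\bounded(\mathbb{P}(\sExt^1(\mathcal{G},\mathcal{O}_X)))$ and~$r$ residual pieces; the latter descend through the generic projective\dash bundle structure of~$\pi$ to~$r$ copies of~$\derived^\bounded(X)$ twisted by~$\mathcal{O}(0),\ldots,\mathcal{O}(r-1)$ in Beilinson fashion. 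Fully faithfulness of each embedding follows from the standard vanishing~$\RRR\pi_*\mathcal{O}(-k)=0$ for~$0<k<r$ combined with the Koszul resolution identifying~$\mathcal{O}_{\mathbb{P}(\mathcal{G})}$ inside~$\mathbb{P}(\mathcal{V}_0)$.

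The main obstacle is the precise control of~$W$ under the stated genericity assumptions. One must verify that~$W$ is smooth of the expected dimension, that the first projection really is the advertised blowup (rather than a more degenerate birational contraction), and that the Koszul resolutions recognising the functors in~\eqref{equation:generalised-bundle-sod} remain exact even along~$\Sing(\mathcal{G})$. This is where conditions~(i)--(iii) enter in the most delicate way, and where the bulk of the geometric work must be concentrated; the subsequent category\dash theoretic mutation argument is then essentially formal.
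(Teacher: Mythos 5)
First, a point of orientation: the paper does not prove this statement at all. It is quoted verbatim from Jiang--Leung \cite[theorem~3.4]{1811.12525v2} and used as a black box, so there is no internal proof to compare against; what follows is an assessment of your sketch on its own terms.

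Your overall strategy (realise $\mathbb{P}(\mathcal{G})$ and $\mathbb{P}(\sExt^1(\mathcal{G},\mathcal{O}_X))$ as zero loci in $\mathbb{P}(\mathcal{V}_0)$ and $\mathbb{P}(\mathcal{V}_1^\vee)$, pass to the incidence divisor $W$, and compare two decompositions of $\derived^\bounded(W)$) is a recognised route and close in spirit to the actual argument. But the geometric claims you make about $W$ are wrong, and they are the load-bearing part of the sketch. The projection $W\to\mathbb{P}(\mathcal{V}_0)$ does not factor through $\mathbb{P}(\mathcal{G})$: it is surjective, with fibre $\mathbb{P}^{f-2}$ (where $f=\rk\mathcal{V}_1$) away from $\mathbb{P}(\mathcal{G})$ and fibre $\mathbb{P}^{f-1}$ over it, so it is not a blowup of $\mathbb{P}(\mathcal{G})$ --- and it could not be a blowup along $\mathbb{P}(\sExt^1(\mathcal{G},\mathcal{O}_X))$, since the latter is not a subvariety of $\mathbb{P}(\mathcal{G})$ at all (it sits inside $\mathbb{P}(\mathcal{V}_1^\vee)$ over $\Sing(\mathcal{G})$). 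Symmetrically, $W\to\mathbb{P}(\mathcal{V}_1^\vee)$ is surjective and is not a projective bundle over $\mathbb{P}(\sExt^1(\mathcal{G},\mathcal{O}_X))$. Consequently neither Orlov's blowup formula nor his projective bundle formula applies where you invoke them; the correct tool for \emph{both} projections is the Cayley-trick decomposition of a universal hyperplane section, which yields $\derived^\bounded(W)=\langle\derived^\bounded(\mathbb{P}(\mathcal{G})), (f-1)e\text{ copies of }\derived^\bounded(X)\rangle$ and $\derived^\bounded(W)=\langle\derived^\bounded(\mathbb{P}(\sExt^1(\mathcal{G},\mathcal{O}_X))), (e-1)f\text{ copies of }\derived^\bounded(X)\rangle$. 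Even with those in hand, ``matching'' the two decompositions is not formal: semiorthogonal decompositions have no Jordan--H\"older property, so the comparison must be carried out by explicit mutations, and that is precisely the substance of Jiang--Leung's proof.

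Two further gaps. The resolution $\mathcal{V}_1\to\mathcal{V}_0\to\mathcal{G}\to 0$ is only assumed to exist locally, so $\mathbb{P}(\mathcal{V}_0)$, $\mathbb{P}(\mathcal{V}_1^\vee)$ and $W$ are not globally defined; the functors in \eqref{equation:generalised-bundle-sod} have to be defined intrinsically --- via twists of $\mathcal{O}_{\mathbb{P}(\mathcal{G})}(1)$ and the correspondence $\mathbb{P}(\mathcal{G})\times_X\mathbb{P}(\sExt^1(\mathcal{G},\mathcal{O}_X))$, exactly as recorded after the theorem in the paper --- with only the verification of fully faithfulness, semiorthogonality and generation performed locally. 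Finally, Orlov's blowup formula requires a smooth centre in a smooth ambient variety, whereas here $\mathbb{P}(\mathcal{G})$ is only assumed irreducible of expected dimension (and in the paper's own applications the centre of interest, e.g.\ the universal subscheme $Z_{n-1}$, is genuinely singular); any argument routed through the classical blowup formula is therefore structurally unavailable.
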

The fully faithful functors in \eqref{equation:generalised-bundle-sod} are
\begin{equation}
  \RRR q_{1,*}\circ\LLL q_2^*\colon\derived^\bounded(\mathbb{P}(\sExt^1(\mathcal{G},\mathcal{O}_X)))\to\derived^\bounded(\mathbb{P}(\mathcal{G}))
\end{equation}
where
\begin{equation}
  \begin{tikzcd}
    & \mathbb{P}(\mathcal{G})\times_X\mathbb{P}(\sExt^1(\mathcal{G},\mathcal{O}_X)) \arrow[ld, swap, "q_1"] \arrow[rd, "q_2"] \\
    \mathbb{P}(\sExt^1(\mathcal{G},\mathcal{O}_X)) & & \mathbb{P}(\mathcal{G})
  \end{tikzcd}
\end{equation}
are the projections and
\begin{equation}
  \LLL\pi^*(-)\otimes\mathcal{O}_{\mathbb{P}(\mathcal{G})}(i)\colon\derived^\bounded(X)\to\derived^\bounded(\mathbb{P}(\mathcal{G}))
\end{equation}
for~$i=1,\ldots,r$, where
$\pi\colon\derived^\bounded(\mathbb{P}(\mathcal{G}))\to\derived^\bounded(X)$ is the natural morphism.

Later, we will also use a special case of \cref{theorem:jiang-leung-bundle} for the blowup in a singular center; see \cref{theorem:jiang-leung-blowup}.

\begin{remark}
  \label{rem:cond23}
  Under the assumption that $\mathcal G$ locally admits a $2$-step locally free resolution, we have
  \begin{equation}
    \im\bigl(\mathbb P(\sExt^1(\mathcal G,\mathcal{O}_X))\xrightarrow\pi X\bigr)= \Sing(\mathcal G)\,;
  \end{equation}
  see \cite[remark~3.5]{1811.12525v2}. We can replace condition~3 of \cref{theorem:jiang-leung-bundle} by the (under the presence of conditions 1 and 2) equivalent condition that~$\im(\pi)=\Sing(\mathcal G)$ has the expected dimension~$\dim X-r+1$ in $X$. Indeed, if~$\mathbb P(\sExt^1(\mathcal G,\mathcal{O}_X))$ and~$\im(\pi)$ are of the same dimension, the morphism~$\pi$ must be generically finite. But the fibers of~$\pi$ are projective spaces, hence, in particular, connected. That means that~$\pi$ is generically an isomorphism. The locus over which~$\pi$ is an isomrphism is exactly~$\Sing(\mathcal G)^{\mathrm{sm}}$; see again \cite[remark~3.5]{1811.12525v2}.
%
\end{remark}

Let us now recall some definitions and results from \cite{MR0151459}.
Fix a base point~$x_0\in C$ and, for~$n\in\mathbb{N}$, write~$\mathcal{O}_C(n)\coloneqq\mathcal{O}_C(n\cdot x_0)$.
Let~$\mathcal{P}\in \Pic(C\times J)$ be the Poincar\'e bundle, and let
\begin{equation}
  \begin{tikzcd}
    & C\times J \arrow[ld, swap, "p"] \arrow[rd, "q"] \\
    J & & C
  \end{tikzcd}
\end{equation}
be the projections. There are for every~$n\in \mathbb{N}$ the \emph{Picard sheaves}
\begin{equation}
  \left\{
    \begin{aligned}
      \mathcal{E}_n&\coloneqq p_*\bigl(\mathcal{P}\otimes q^*\mathcal{O}_C(n)\bigr) \\
      \mathcal{F}_n&\coloneqq\RR^1p_*\bigl(\mathcal{P}\otimes q^*\mathcal{O}_C(n)\bigr)\,.
    \end{aligned}
  \right.
\end{equation}
We can write the canonical bundle of~$C$ in the form~$\omega_X\cong\mathcal{O}_C(2g-2)\otimes \mathcal{K}$ for some degree zero line bundle~$\mathcal{K}\in \Pic^0(C)$. We consider the automorphism~$\theta$ of~$J=\Pic^0(C)$ given by~$\theta(\mathcal{L})\coloneqq\mathcal{K}\otimes \mathcal{L}^\vee$.
Then, for every~$n\geq 1$, there is an isomorphism
\begin{equation}
  \label{eq:SymP}
  C^{(n)}\cong\mathbb{P}(\theta^*\mathcal{F}_{2g-2-n})
\end{equation}
of varieties over~$J$; see \cite[theorem~4]{MR0151459}. Furthermore, we have isomorphisms
\begin{equation}
  \label{eq:thetaEF}
  \left\{
    \begin{aligned}
      \theta^*\mathcal{E}_n&\cong p_*\bigl(\mathcal{P}^\vee\otimes q^*(\mathcal{O}_C(n)\otimes \mathcal{K})\bigr) \\
      \theta^*\mathcal{F}_n&\cong \RR^1p_*\bigl(\mathcal{P}^\vee\otimes q^*(\mathcal{O}_C(n)\otimes \mathcal{K})\bigr)\,;
    \end{aligned}
  \right.
\end{equation}
see \cite[lemma~1]{MR0151459}. We will also use the fact that
\begin{equation}
  \label{eq:Evanish}
  \mathcal{E}_i=0\quad\text{for } i<g\,;
\end{equation}
see \cite[corollary~2]{MR0151459}.

\begin{lemma}
  \label{lem:Ext1}
  For~$n=g,\ldots,2g-2$ we have~$\sExt^1(\theta^*\mathcal{F}_{2g-2-n},\mathcal{O}_J)\cong \mathcal{F}_{n}$.
\end{lemma}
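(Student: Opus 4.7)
The plan is to identify $\sExt^1(\theta^*\mathcal F_{2g-2-n},\mathcal O_J)$ as the second cohomology sheaf (up to a shift) of a single derived pushforward along $p\colon C\times J\to J$, computed on both sides via Grothendieck--Serre duality. Throughout, write $\mathcal L_m\coloneqq \mathcal P\otimes q^*\mathcal O_C(m)$ and $\mathcal M_m\coloneqq \mathcal P^\vee\otimes q^*(\mathcal O_C(m)\otimes \mathcal K)$, so that by \eqref{eq:thetaEF} the sheaves $\theta^*\mathcal E_m$ and $\theta^*\mathcal F_m$ are the cohomology sheaves of $\RRR p_*\mathcal M_m$ in degrees $0$ and $1$.

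First I would use the hypothesis on the range to collapse the complex. Since $n\in\{g,\dots,2g-2\}$, we have $2g-2-n\le g-2<g$, so \eqref{eq:Evanish} gives $\mathcal E_{2g-2-n}=0$, hence $\theta^*\mathcal E_{2g-2-n}=0$. Consequently $\RRR p_*\mathcal M_{2g-2-n}$ is concentrated in a single degree, and there is a canonical quasi-isomorphism
\begin{equation}
  \RRR p_*\mathcal M_{2g-2-n}\;\cong\;\theta^*\mathcal F_{2g-2-n}[-1].
\end{equation}

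Next I would apply Grothendieck--Serre duality for the smooth projection $p$, whose relative dualising complex is $q^*\omega_C[1]$. For $\mathcal G\in\derived^\bounded(C\times J)$ this reads $\RRR\sHom(\RRR p_*\mathcal G,\mathcal O_J)\cong \RRR p_*\RRRsHom(\mathcal G,q^*\omega_C)[1]$. Taking $\mathcal G=\mathcal M_{2g-2-n}$ and using $\omega_C\cong \mathcal O_C(2g-2)\otimes\mathcal K$, a direct computation gives
\begin{equation}
  \RRRsHom(\mathcal M_{2g-2-n},q^*\omega_C)\;=\;\mathcal P\otimes q^*\bigl(\mathcal O_C(-(2g-2-n))\otimes \mathcal K^{-1}\otimes \omega_C\bigr)\;=\;\mathcal L_n.
\end{equation}

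Combining the two steps, duality yields
\begin{equation}
  \RRR\sHom(\theta^*\mathcal F_{2g-2-n},\mathcal O_J)[1]\;\cong\;\RRR\sHom(\theta^*\mathcal F_{2g-2-n}[-1],\mathcal O_J)\;\cong\;\RRR p_*\mathcal L_n[1],
\end{equation}
and cancelling the shift leaves $\RRR\sHom(\theta^*\mathcal F_{2g-2-n},\mathcal O_J)\cong \RRR p_*\mathcal L_n$. Comparing $\HH^1$ on both sides, the left-hand side is by definition $\sExt^1(\theta^*\mathcal F_{2g-2-n},\mathcal O_J)$, while the right-hand side is $\RR^1 p_*\mathcal L_n=\mathcal F_n$, giving the claim.

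The only subtle point is the justification of the vanishing of $\theta^*\mathcal E_{2g-2-n}$, which boils down to the cited fact \eqref{eq:Evanish}; apart from that, every step is formal (base change plus duality) and I do not expect a real obstacle. As a free by-product, comparing $\HH^0$'s also yields $\sHom(\theta^*\mathcal F_{2g-2-n},\mathcal O_J)\cong \mathcal E_n$ (which could be useful for verifying the running dimension/rank hypotheses when \cref{theorem:jiang-leung-bundle} is applied to $\mathcal{G}=\theta^*\mathcal F_{2g-2-n}$).
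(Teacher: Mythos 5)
Your proposal is correct and follows essentially the same route as the paper's proof: the vanishing $\mathcal{E}_{2g-2-n}=0$ from \eqref{eq:Evanish} collapses $\RRR p_*\bigl(\mathcal{P}^\vee\otimes q^*(\mathcal{O}_C(2g-2-n)\otimes\mathcal{K})\bigr)$ to $\theta^*\mathcal{F}_{2g-2-n}[-1]$, and Grothendieck--Verdier duality along $p$ with $\omega_p\cong q^*(\mathcal{O}_C(2g-2)\otimes\mathcal{K})$ turns the dual into $\RRR p_*(\mathcal{P}\otimes q^*\mathcal{O}_C(n))$, whose first cohomology is $\mathcal{F}_n$. The shift bookkeeping is consistent with the paper's, and the extra observation that $\sHom(\theta^*\mathcal{F}_{2g-2-n},\mathcal{O}_J)\cong\mathcal{E}_n$ is a harmless bonus.
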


\begin{proof}
  Note that $2g-2-n<g$. Hence by \eqref{eq:Evanish} we have~$0=\mathcal{E}_{2g-2-n}$. Combining this vanishing with \eqref{eq:thetaEF} gives
  \begin{equation}
    0=\theta^*\mathcal{E}_{2g-2-n}\cong p_*\bigl(\mathcal{P}^\vee\otimes q^*(\mathcal{O}_C(2g-2-n)\otimes \mathcal{K})\bigr)\,.
  \end{equation}
  Hence the derived pushforward~$\RRR p_*\bigl(\mathcal{P}^\vee\otimes q^*(\mathcal{O}_C(2g-2-n)\otimes \mathcal{K})\bigr)$ is concentrated in degree 1, which means that
  \begin{equation}
    \RRR p_*\bigl(\mathcal{P}^\vee\otimes q^*(\mathcal{O}_C(2g-2-n)\otimes \mathcal{K})\bigr)\cong \theta^*\mathcal{F}_{2g-2-n}[-1]\,.
  \end{equation}
  This implies
  \begin{equation}\label{eq:Ext1}
   \sExt^1(\theta^*\mathcal{F}_{2g-2-n},\mathcal{O}_J)\cong \mathcal{H}^0\Bigr(\RRRsHom_J\bigl(\RRR p_*(\mathcal{P}^\vee\otimes q^*(\mathcal{O}_C(2g-2-n)\otimes \mathcal{K})),\mathcal{O}_J\bigl)\Bigr)
  \end{equation}
  By Grothendieck--Verdier duality, we have
  \begin{equation}\label{eq:GVdual}
   \RRRsHom_J\bigl(\RRR p_*(\mathcal{P}^\vee\otimes q^*(\mathcal{O}_C(2g-2-n)\otimes \mathcal{K})),\mathcal{O}_J\bigl)\cong\RRR p_*\RRRsHom_{C\times J}\bigl(\mathcal{P}^\vee\otimes q^*(\mathcal{O}_C(2g-2-n)\otimes \mathcal{K}),\omega_p\bigl)[1]
  \end{equation}
  Note that $\omega_p\cong q^*\omega_C\cong q^*(\mathcal{O}_C(2g-2)\otimes \mathcal{K})$. Hence
  \begin{equation}\label{eq:RHom}
    \RRRsHom_{C\times J}\bigl(\mathcal{P}^\vee\otimes q^*(\mathcal{O}_C(2g-2-n)\otimes \mathcal{K}),\omega_p\bigl)\cong \mathcal{P}\otimes q^*\mathcal{O}_C(n)[0]\,.
  \end{equation}
  Plugging \eqref{eq:RHom} into \eqref{eq:GVdual} gives
  \begin{equation}
    \label{eq:final}
    \mathcal{H}^0\Bigr( \RRRsHom_{J}\bigl(\RRR p_*(\mathcal{P}^\vee\otimes q^*(\mathcal{O}_C(2g-2-n)\otimes \mathcal{K})),\mathcal{O}_J\bigl)\Bigl)\cong\mathrm{R}^1p_*\bigl(\mathcal{P}\otimes q^*\mathcal{O}_C(n)\bigr)=\mathcal{F}_n
  \end{equation}
  Combining \eqref{eq:final} with \eqref{eq:Ext1} gives the assertion.
\end{proof}

\begin{proof}[Proof of \cref{theorem:toda}]
  Set~$\mathcal{G}\coloneqq\theta^*\mathcal{F}_{2g-2-n}$. By \eqref{eq:SymP} together with \cref{lem:Ext1}, we have
  \begin{equation}
    \mathbb{P}(\sExt^1(\mathcal{G},\mathcal{O}_J))\cong\mathbb{P}(\mathcal{F}_n)\cong C^{(2g-2-n)} .
  \end{equation}
  Hence the semiorthogonal decomposition \eqref{equation:generalised-bundle-sod} from \cref{theorem:jiang-leung-bundle} gives rise to \eqref{equation:symsod}, once we have checked that~$\mathcal{G}$ satisfies the assumptions. Note that the isomorphism $\mathbb{P}(\sExt^1(\mathcal{G},\mathcal{O}_J)) \cong C^{(2g-2-n)}$ identifies the canonical map $\pi \colon \mathbb{P}(\sExt^1(\mathcal{G},\mathcal{O}_J))\to J$ with $\theta\circ \AJ$, where $\AJ\colon C^{(2g-2-n)}\to J$ is the Abel--Jacobi map. In particular, $\im(\pi)=\theta(\im(\AJ))$.

  By \cite[proposition 4 and the following remark]{MR0151459}, we know that~$\mathcal{G}$ has a two-term resolution by locally free sheaves. Note that~$r\coloneqq\rk(\mathcal{G})=\dim C^{(n)}-\dim J+1=n-g+1$. Hence~$\mathbb{P}(\Ext^1(\mathcal{G},\mathcal{O}_J))\cong C^{(2g-2-n)}$ has the expected dimension~$2g-2-n=g-r-1$.

  Since~$2g-2-n< g$, we have $\dim(\im(\AJ))=\dim C^{(2g-2-n)}=2g-2-n$; see \cite[page~25]{MR0770932}.
  Since, as noted above we have $\im (\pi)=\theta(\im(\AJ))$, we also have $\dim(\im(\pi))=2g-2-n$. By \cref{rem:cond23}, this shows that also the third condition of \cref{theorem:jiang-leung-bundle} is fulfilled.
\end{proof}

\begin{remark}
  The same technique has been used independently by the authors of \cite{1811.12525v2} in the second version of their preprint.
\end{remark}

\section{Semiorthogonal decompositions for nested Hilbert schemes}
\label{section:sod-nested}
In this section we prove \cref{theorem:sod-nested}. Nested Hilbert schemes are usually seen as a means to set up correspondences between Hilbert schemes of points, e.g.~in the construction of actions of Heisenberg algebras on the cohomology of Hilbert schemes. But one can also study them for their own sake, as we do in this section.

\begin{definition}
  Let~$S$ be a smooth projective surface. Let~$n\geq 2$. The \emph{nested Hilbert scheme} is
  \begin{equation}
    \nestedhilbn{n-1}{n}{S}\coloneqq\big\{ (\zeta,\xi)\in\hilbn{n-1}{S}\times\hilbn{n}{S} \mid \zeta\subset\xi \big\}\hookrightarrow\hilbn{n-1}{S}\times\hilbn{n}{S}.
  \end{equation}
\end{definition}
By \cite[theorem~3.0.1]{MR1616606} the nested Hilbert scheme~$\nestedhilbn{n-1}{n}{S}$ is a smooth projective variety of dimension~$2n$, which is of Kodaira dimension~$n\kappa(S)$ by the geometric description \eqref{equation:nested-geometry}, together with \cite[theorem~11.1.2]{MR2665168}.

\begin{remark}
 In the literature, also nested Hilbert schemes of the form
$\nestedhilbn{m}{n}{S}\subset \hilbn{m}{S}\times\hilbn{n}{S}$
 for~$m<n$ are considered. However, for $m\neq n-1$ they are singular; see \cite[theorem~3.0.1]{MR1616606}.
Hence we will only consider the case~$m=n-1$.
%
\end{remark}

As summarised in \cite[\S3.1]{MR3477955} the nested Hilbert scheme~$\nestedhilbn{n-1}{n}{S}$ comes with two projection morphisms which we will denote as
\begin{equation}
  \label{equation:nested-geometry}
  \begin{tikzcd}
    & \nestedhilbn{n-1}{n}{S} \arrow[ld, swap, "f_n"] \arrow[rd, "g_n"] \\
    \hilbn{n-1}{S} & & \hilbn{n}{S}.
  \end{tikzcd}
\end{equation}
These send the pair~$(\zeta,\xi)$ to~$\zeta$ (resp.~$\xi$). Moreover, we have the morphism
\begin{equation}
  q_n\colon\nestedhilbn{n-1}{n}{S}\to S
\end{equation}
which sends the pair~$(\zeta,\xi)$ to the difference~$\xi\setminus\zeta$.
The morphism
\begin{equation}
 \phi_n\coloneqq q_n\times g_n\colon\nestedhilbn{n-1}{n}{S}\to S\times\hilbn{n}{S}
\end{equation}
has as its image the universal subscheme~$Z_n$, and is a resolution of singularities of~$Z_n$. Recall that~$Z_n$ is singular, as soon as~$n\geq 3$. However, it is always Cohen--Macaulay as it is flat and finite over the smooth base $S^{[n]}$. 
For~$n=1$ it is nothing but the diagonal, whilst for~$n=2$ it is~$\Bl_\Delta S\times S$.

The morphism
\begin{equation}
  \label{equation:nested-blowup}
  \gamma_n=q_n\times f_n\colon\nestedhilbn{n-1}{n}{S}\to S\times\hilbn{n-1}{S}
\end{equation}
on the other hand is the \emph{blowup} of~$S\times\hilbn{n-1}{S}$ in the universal subscheme~$Z_{n-1}$.

The proof of \cref{theorem:sod-nested} relies on a generalisation of Orlov's blowup formula \cite[theorem~4.3]{MR1208153}, which in this case cannot be applied as the center of the blowup in \eqref{equation:nested-blowup} is singular. This can be remedied by using a more general version of the blowup formula which is an instance of a projective bundle formula for not necessarily locally free sheaves in homological projective geometry \cite[theorem~3.4]{1811.12525v2}, see \cref{theorem:jiang-leung-bundle}. The special case we will apply is described in \cite[\S3.1.2]{1811.12525v2}.

\begin{theorem}[Jiang--Leung's generalised blowup formula]
  \label{theorem:jiang-leung-blowup}
  Let~$X$ be a smooth projective variety. Let~$Z\hookrightarrow X$ be a Cohen--Macaulay subscheme of codimension 2 which is cut out by the ideal sheaf~$\mathcal{I}_Z$. This sheaf admits a locally free resolution
  \begin{equation}
    0\to\mathcal{F}\overset\sigma\to\mathcal{E}\to\mathcal{I}_Z\to 0
  \end{equation}
  where~$\rk\mathcal{E}=\rk\mathcal{F}+1$.

  Then there exists a semiorthogonal decomposition
  \begin{equation}
    \derived^\bounded(\Bl_ZX)
    =
    \left\langle
      \derived^\bounded(\widetilde{Z}),
      \derived^\bounded(X)
    \right\rangle,
  \end{equation}
  where~$\widetilde{Z}\to Z$ is the (Springer-type) resolution of singularities given by
  \begin{equation}
    \widetilde{Z}
    \coloneqq \mathbb P(\sExt^1(\mathcal I_Z,\mathcal{O}_X))=
    \{
      (x,[H_x])\mid\image\sigma^\vee(x)\subseteq H_x
    \}
    \subseteq\mathbb{P}_X(\mathcal{F}^\vee)
  \end{equation}
\end{theorem}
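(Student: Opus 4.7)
The plan is to derive the generalised blowup formula as a direct specialisation of the generalised projective bundle formula recalled as \cref{theorem:jiang-leung-bundle}, applied to the coherent rank~$1$ sheaf $\mathcal{G} \coloneqq \mathcal{I}_Z$ on $X$.

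The first step is to identify the blowup with the projectivization: $\Bl_Z X \cong \mathbb{P}(\mathcal{I}_Z)$. In general the blowup equals $\operatorname{Proj}\bigl(\bigoplus_{n\geq 0} \mathcal{I}_Z^n\bigr)$, which agrees with $\mathbb{P}(\mathcal{I}_Z) = \operatorname{Proj}(\Sym^\bullet \mathcal{I}_Z)$ precisely when $\mathcal{I}_Z$ is of linear type. For a Cohen--Macaulay subscheme of codimension~$2$, the ideal $\mathcal{I}_Z$ is a perfect ideal of grade~$2$, and the Hilbert--Burch theorem implies that, locally, $\mathcal{I}_Z$ is generated by the maximal minors of the matrix of $\sigma$. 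This classical setup is well-known to ensure that $\mathcal{I}_Z$ is of linear type, and hence gives the desired identification.

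Next I verify the three hypotheses of \cref{theorem:jiang-leung-bundle} for $\mathcal{G} = \mathcal{I}_Z$ with $r = 1$. The given resolution $0 \to \mathcal{F} \xrightarrow{\sigma} \mathcal{E} \to \mathcal{I}_Z \to 0$ is the required two-step locally free resolution; $\mathbb{P}(\mathcal{I}_Z) = \Bl_Z X$ is irreducible of the expected dimension $\dim X + r - 1 = \dim X$; and $\Sing(\mathcal{I}_Z)$ coincides with $Z$, which by hypothesis has codimension $r + 1 = 2$ in $X$. By \cref{rem:cond23}, this last condition is equivalent, in the presence of the other hypotheses, to the irreducibility and expected dimension of $\mathbb{P}(\sExt^1(\mathcal{I}_Z, \mathcal{O}_X))$. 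Applying \cref{theorem:jiang-leung-bundle} then yields the semiorthogonal decomposition with a single copy of $\derived^\bounded(X)$, as $r = 1$.

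Finally, it remains to identify $\mathbb{P}(\sExt^1(\mathcal{I}_Z, \mathcal{O}_X))$ with the explicit incidence variety $\widetilde{Z} \subseteq \mathbb{P}_X(\mathcal{F}^\vee)$. Applying $\sHom(-, \mathcal{O}_X)$ to the resolution gives
\begin{equation}
  0 \to \sHom(\mathcal{I}_Z, \mathcal{O}_X) \to \mathcal{E}^\vee \xrightarrow{\sigma^\vee} \mathcal{F}^\vee \to \sExt^1(\mathcal{I}_Z, \mathcal{O}_X) \to 0,
\end{equation}
so $\sExt^1(\mathcal{I}_Z, \mathcal{O}_X) \cong \operatorname{coker}(\sigma^\vee)$. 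Projectivizing the surjection $\mathcal{F}^\vee \twoheadrightarrow \sExt^1(\mathcal{I}_Z, \mathcal{O}_X)$ realises $\widetilde Z$ as the closed subscheme of $\mathbb{P}_X(\mathcal{F}^\vee)$ whose fibre over $x$ consists of those hyperplanes $H_x$ containing $\image \sigma^\vee(x)$, which is exactly the description in the statement. Part~(1) of \cref{theorem:jiang-leung-bundle} further ensures that $\widetilde Z \to \Sing(\mathcal{I}_Z) = Z$ is a resolution of singularities. The principal obstacle in this plan is the identification $\Bl_Z X \cong \mathbb{P}(\mathcal{I}_Z)$, which depends on the linear-type property of codimension~$2$ Cohen--Macaulay ideals; once this is in hand, the remaining steps are essentially bookkeeping with \cref{theorem:jiang-leung-bundle}.
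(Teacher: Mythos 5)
The paper itself does not prove this statement: it is quoted from Jiang--Leung with a pointer to their \S3.1.2, precisely because it is meant to be the $r=1$, $\mathcal{G}=\mathcal{I}_Z$ specialisation of \cref{theorem:jiang-leung-bundle}. Your overall strategy is therefore the intended one, and your final identification of $\mathbb{P}(\sExt^1(\mathcal{I}_Z,\mathcal{O}_X))$ with the incidence variety in $\mathbb{P}_X(\mathcal{F}^\vee)$ via dualising the resolution is correct. The gap is in the middle: the hypotheses (1)--(3) of \cref{theorem:jiang-leung-bundle} are \emph{not} automatic for a Cohen--Macaulay subscheme of codimension~$2$, and your verifications do not establish them. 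The claim that such an ideal is ``well known to be of linear type'' is false: for an ideal of projective dimension one, linear type is equivalent (Avramov, Herzog--Simis--Vasconcelos, Huneke) to the condition that the local number of generators of $\mathcal{I}_Z$ is bounded by the local codimension at every point of $Z$ (condition $G_\infty$), which is a genuine extra hypothesis. Your check of condition~(1) is moreover circular, since it asserts ``$\mathbb{P}(\mathcal{I}_Z)=\Bl_ZX$ is irreducible of expected dimension'', which presupposes the identification being proved; condition~(2) is never checked; and \cref{rem:cond23} does not say that (2) and (3) are equivalent to one another, only that (3) may be replaced by a dimension count on $\Sing(\mathcal{G})$ once (1) and (2) are known.

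These are not cosmetic defects. Take $X=\mathbb{P}^3$ and $Z$ the first infinitesimal neighbourhood of a line, locally $\mathcal{I}_Z=(x^2,xy,y^2)$. Then $Z$ is Cohen--Macaulay of codimension~$2$ and its Hilbert--Burch resolution $0\to\mathcal{O}(-3)^{\oplus 2}\to\mathcal{O}(-2)^{\oplus 3}\to\mathcal{I}_Z\to 0$ has $\rk\mathcal{E}=\rk\mathcal{F}+1$, yet $\mathbb{P}(\mathcal{I}_Z)$ is reducible (the fibres over $Z$ are $\mathbb{P}^2$'s), $\Sing(\mathcal{I}_Z)^{\mathrm{sm}}=\emptyset$ because $Z$ is nowhere a local complete intersection, and $\mathbb{P}(\sExt^1(\mathcal{I}_Z,\mathcal{O}_X))$ is $2$\dash dimensional, so it is neither of the expected dimension $\dim X-2$ nor birational to $Z$. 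So the reduction to \cref{theorem:jiang-leung-bundle} requires either importing the irreducibility and expected-dimension hypotheses into the statement (as Jiang--Leung effectively do) or verifying them for the particular $Z$ at hand; in the paper's application $Z=Z_{n-1}$ is generically reduced, hence generically a local complete intersection and flat of degree one over the relevant locus, and the conditions do hold there. To repair your argument you should add (or verify) these hypotheses explicitly; once they are in place, irreducibility of $\mathbb{P}(\mathcal{I}_Z)$ of dimension $\dim X$ forces $\mathbb{P}(\mathcal{I}_Z)$ to coincide with the blowup without any appeal to linear type.
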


With this result available to us, the proof of \cref{theorem:sod-nested} is very short.
\begin{proof}[Proof of \cref{theorem:sod-nested}]
  By the proof of \cite[lemma~4.7]{MR0335512} we have that~$Z_{n-1}$ is Cohen--Macaulay, and it is of codimension~2. So it suffices to check that the resolution $\mathbb P(\sExt^1(\mathcal I_{Z_{n-1}},\mathcal{O}_{S\times \hilbn{n}{S}}))$ of~$Z_{n-1}$ is isomorphic to~$\nestedhilbn{n-2}{n-1}{S}$. But this is the content of \cite[theorem~2]{MR1830227}.
\end{proof}


\bibliographystyle{plain}
\bibliography{automatic-clean,other}

\end{document}